\newtheorem{thm}{Theorem}
\newtheorem{lem}{Lemma}
\newtheorem{cor}{Corollary}
\newtheorem{example}{Example}
\theoremstyle{remark}
\newtheorem{rem}{Remark}
\begin{document}
\global\long\def\tr{\operatorname{tr}}

\global\long\def\proj{\operatorname{proj}}

\global\long\def\ric{\operatorname{Ric}}

\title[]{Geometric properties of self-shrinkers in cylinder shrinking Ricci solitons}

\author{Matheus Vieira and Detang Zhou}
\thanks{The work was partially supported by CNPq and Faperj of Brazil.}
\address{Departamento de Matem\'atica\\Universidade Federal do Esp\'\i rito Santo \\Vit\'oria, ES 29075-910, Brazil.} \email{matheus.vieira@ufes.br
}
\address{Instituto de Matem\'atica e Estat\'\i stica\\ Universidade Federal Fluminense \\Niter\'oi, RJ 24020-140, Brazil.} \email{zhou@impa.br}

\maketitle
\begin{abstract}
In this paper we prove some spectral properties of the drifted Laplacian
of self-shrinkers properly immersed in gradient shrinking Ricci solitons. Then we use these results to prove some geometric properties of self-shrinkers. For example, we describe a collection of domains in the ambient space that cannot contain self-shrinkers.
\end{abstract}

\section{Introduction}

It is an important problem in geometry to study the singularities
of the mean curvature flow. Self-shrinkers play a key role in the study of type I
singularities of the flow. Recall that a self-shrinker is a submanifold
of $\mathbb{R}^{n+1}$ satisfying 
\[
\overrightarrow{H}+\frac{1}{2}x^{\perp}=0,
\]
where $\overrightarrow{H}$ is the mean curvature vector and $x^{\perp}$
is the projection of the position vector to the normal bundle. Many important results in this topic have been done since  the publication of
Colding and Minicozzi \cite{CM}. 

Self-shrinkers are often compared with minimal surfaces. Hoffman and
Meeks \cite{HM} proved that a nonplanar minimal surface properly
immersed in $\mathbb{R}^{3}$ cannot lie in a open halfspace. In
higher dimensions this result fails. The main difference is that a
catenoid in $\mathbb{R}^{3}$ cannot lie in a open halfspace but
a catenoid in $\mathbb{R}^{n+1}$ lies between two parallel hyperplanes
for $n\geq3$ (see \cite{MR2515414} for properties of the higher dimensional catenoid). Recently, Cavalcante and Espinar \cite{CE} proved a halfspace theorem for self-shrinkers properly immersed in $\mathbb{R}^{n+1}$. The proof uses the idea of Hoffman and Meeks and a catenoid type self-shrinker discovered by Kleene and Moller \cite{KM}. Note that there are many examples of compact self-shrinkers and noncompact proper self-shrinkers.

In this paper we show that  strong halfspace type theorems for self-shrinkers are quite different from that of minimal surfaces and it is natural to use eigenvalues and eigenfunctions of the corresponding drifted Laplacian to prove them. We describe a collection of domains in the ambient space that cannot contain proper self-shrinkers. We also show some other geometric properties of self-shrinkers. Moreover, using spectral properties of the drifted Laplacian we can give short proofs of these results. Cheng and Zhou \cite{CZ} proved that for a self-shrinker in $\mathbb{R}^{n+1}$
the conditions of proper immersion, Euclidean volume growth, polynomial
volume growth and finite weighted volume are equivalent to each other.
They also proved that the drifted Laplacian of a self-shrinker properly
immersed in $\mathbb{R}^{n+1}$ has discrete spectrum \cite{CZ13}.
Thus any eigenfunction of the drifted Laplacian with nonzero eigenvalue
is orthogonal to the constant function with respect to the $L_{f}^{2}$
inner product. The spectrum of the drifted Laplacian contains the
eigenvalues $\left\{ 0,\frac{1}{2},1\right\} $ with corresponding
eigenfunctions $\left\{ 1;x_{1},\dots,x_{n+1};2n-\left|x\right|^{2}\right\} $.
Applying the idea above to these eigenfunctions we easily prove the
following result.
\begin{thm}
\label{thm:selfshrinkereuclidean1}Any self-shrinker hypersurface properly immersed in $\mathbb{R}^{n+1}$ intersects all members
of the collection $\mathcal{C}$ given by 
\[
\mathcal{C}:=\left\{ S^{n}\left(tp,\sqrt{2n+t^{2}}\right):p\textrm{ is a unit vector in }\mathbb{R}^{n+1}\textrm{ and }0\leq t\leq\infty\right\} .
\]
Moreover, if the self-shrinker lies in the closed set $\overline{B^{n+1}\left(tp,\sqrt{2n+t^{2}}\right)}$,
then either it is the sphere $S^{n}\left(o,\sqrt{2n}\right)$ with $t=0$,
or it is the hyperplane $\left\langle x,p\right\rangle =0$ with $t=\infty$.\end{thm}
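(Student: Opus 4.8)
The plan is to feed the three eigenfunctions listed in the introduction into the orthogonality relation recalled there: any eigenfunction of the drifted Laplacian with nonzero eigenvalue is $L_f^2$-orthogonal to the constant $1$. Write $\Sigma$ for the self-shrinker, $f=|x|^2/4$, and $d\mu_f=e^{-f}d\mu$ for the weighted volume element. First I would record the identities $\mathcal L x_i=-\tfrac12 x_i$ and $\mathcal L(2n-|x|^2)=-(2n-|x|^2)$ for the drifted Laplacian $\mathcal L=\Delta_\Sigma-\tfrac12\langle x^{\top},\nabla\,\cdot\,\rangle$, which follow from $\Delta_\Sigma x=\overrightarrow H=-\tfrac12 x^{\perp}$ and $|\nabla_\Sigma x|^2=n$; since $\Sigma$ is properly immersed, the Cheng--Zhou results give finite weighted volume and discrete spectrum, so $x_i$ and $2n-|x|^2$ are genuine eigenfunctions with the nonzero eigenvalues $\tfrac12$ and $1$, whence
\[
\int_\Sigma x_i\,d\mu_f=0\quad(i=1,\dots,n+1),\qquad\int_\Sigma\bigl(|x|^2-2n\bigr)\,d\mu_f=0.
\]
(Equivalently, these two identities are just the weighted divergence theorem applied to $\mathcal L x_i$ and $\mathcal L|x|^2$.)

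Next, given a unit vector $p\in\mathbb{R}^{n+1}$ and $0\le t<\infty$, I would introduce $g(x)=|x-tp|^2-(2n+t^2)=|x|^2-2t\langle x,p\rangle-2n$, so that $S^n(tp,\sqrt{2n+t^2})=\{g=0\}$ and $\overline{B^{n+1}(tp,\sqrt{2n+t^2})}=\{g\le 0\}$. The identities above yield at once
\[
\int_\Sigma g\,d\mu_f=\int_\Sigma(|x|^2-2n)\,d\mu_f-2t\Bigl\langle p,\int_\Sigma x\,d\mu_f\Bigr\rangle=0.
\]
Since $d\mu_f>0$ and $\Sigma$ is connected (or, arguing on each connected component, itself a properly immersed self-shrinker), a continuous function with vanishing weighted integral cannot be of one strict sign, so $g$ must vanish somewhere on $\Sigma$; that is, $\Sigma$ meets $S^n(tp,\sqrt{2n+t^2})$. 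For $t=\infty$ I would run the same argument with the single linear eigenfunction $h(x)=\langle x,p\rangle$ of eigenvalue $\tfrac12$, obtaining $\int_\Sigma h\,d\mu_f=0$ and hence $\Sigma\cap\{\langle x,p\rangle=0\}\neq\emptyset$. This settles the first assertion.

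For the rigidity statement, if $\Sigma\subset\overline{B^{n+1}(tp,\sqrt{2n+t^2})}$ with $t<\infty$ then $g\le 0$ on $\Sigma$, which combined with $\int_\Sigma g\,d\mu_f=0$ and $d\mu_f>0$ forces $g\equiv 0$; hence $\Sigma$ is contained in the $n$-dimensional sphere $S^n(tp,\sqrt{2n+t^2})$. As $\Sigma$ is $n$-dimensional and its image is both open (immersion of equal dimension) and closed (properness) in that connected sphere, the image is the entire sphere, and a short direct check shows that $S^n(c,r)$ satisfies $\overrightarrow H+\tfrac12 x^{\perp}=0$ only for $c=o$ and $r=\sqrt{2n}$, so necessarily $t=0$ and $\Sigma=S^n(o,\sqrt{2n})$. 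If instead $t=\infty$, so $\Sigma$ lies in the halfspace $\{\langle x,p\rangle\le 0\}$, then $h\le 0$ together with $\int_\Sigma h\,d\mu_f=0$ gives $h\equiv 0$, so $\Sigma$ lies in --- and therefore equals --- the hyperplane $\{\langle x,p\rangle=0\}$, which is a self-shrinker because it contains the origin. I expect the only delicate point to be the justification that $\int_\Sigma\mathcal L\phi\,d\mu_f=0$ for the unbounded functions $\phi=x_i$ and $\phi=|x|^2$ (equivalently, that these really are $L_f^2$-eigenfunctions): this is precisely where properness is used, through the Cheng--Zhou equivalences and the Gaussian decay of $e^{-f}$, and everything else is the short computation advertised in the introduction.
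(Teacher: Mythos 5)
Your proposal is correct and follows essentially the same route as the paper: the same eigenfunctions $x_i$ (eigenvalue $\tfrac12$) and $2n-|x|^2$ (eigenvalue $1$), the same function $|x-tp|^2-(2n+t^2)$ decomposed as their linear combination, orthogonality to constants via properness (finite weighted volume, $L_f^2$ membership, discrete spectrum), and the same sign/rigidity argument concluding with the observation that among spheres only $S^{n}(o,\sqrt{2n})$ is a self-shrinker. Your explicit remarks about connected components and about the image of the immersion filling out the sphere are fine refinements of points the paper passes over silently.
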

\begin{rem}
As $t\to\infty$ the sphere $\ensuremath{S^{n}\left(tp,\sqrt{2n+t^{2}}\right)}$
converges to the hyperplane $\left\langle x,p\right\rangle =0$ and
the closed ball $\overline{B^{n+1}\left(tp,\sqrt{2n+t^{2}}\right)}$
converges to the closed halfspace $\left\langle x,p\right\rangle \geq0$.
For any vector $v$ in $\mathbb{R}^{n+1}$ the collection $\mathcal{C}$
contains the sphere $S^{n}\left(v,\sqrt{2n+\left|v\right|^{2}}\right)$
as well the hyperplane $\left\langle x,v\right\rangle =0$.
\end{rem}
\begin{tikzpicture}
\draw[->] (-2,0) -- (5,0) node[right] {$\mathbb{R}^{n}$};
\draw[->] (0,-2) -- (0,5) node[above] {$\mathbb{R}$};
\draw [dashed] (-2,-2) -- (5,5);
\draw[->, thick] (0,0) -- (0.5,0.5) node[right] {$p$};
\draw (0,0) circle [radius=sqrt(2)];
\node [right] at (1,1) {$S^{n}(0,\sqrt{2n})$};
\draw (1,1) circle [radius=sqrt(4)];
\draw (2,2) circle [radius=sqrt(10)];
\node [right] at (4.3,4.3) {$S^{n}(tp,\sqrt{2n+t^2})$};
\draw (-2,2) -- (2,-2) node[right] {$\langle x,p \rangle =0$};
\end{tikzpicture}

Among all self-shrinkers, spheres and  cylinders deserve special attention since they are generic singularities of the mean curvature flow \cite{CM}. Moreover, Colding and Minicozzi \cite{CM2} proved that  the blowup at each generic singularity of a mean curvature flow is unique; that is,  it does not depend on the sequence of rescalings.

Extending  Theorem \ref{thm:selfshrinkereuclidean1} we prove the following result.
\begin{cor}
\label{cor:selfshrinkereuclidean2}Any self-shrinker hypersurface properly immersed in $\mathbb{R}^{n+1}$ satisfies the following
geometric properties:

(a) If the Gaussian image of the self-shrinker lies in a closed semisphere, then
it is a hyperplane.

(b) The self-shrinker cannot lie inside the closed product
\[
\overline{B^{k+1}\left(v,\sqrt{2k+\left|v\right|^{2}}\right)}\times\mathbb{R}^{n-k}
\]
for any vector $v$ in $\mathbb{R}^{k+1}$, unless it is the product $S^{k}\left(o,\sqrt{2k}\right)\times\mathbb{R}^{n-k}$ and $v=0$.

(c) The self-shrinker cannot lie outside the closed product
\[
\overline{B^{k+1}\left(v,\sqrt{2\left(k+1\right)+\left|v\right|^{2}}\right)}\times\mathbb{R}^{n-k}
\]
for any vector $v$ in $\mathbb{R}^{k+1}$.

(d) There is no hyperplane passing through the origin separating the self-shrinker into two parts with one part lying inside the closed ball $\overline{B^{n+1}\left(o,\sqrt{2n}\right)}$
and the other part lying outside the open ball $B^{n+1}\left(o,\sqrt{2n}\right)$,
unless it is the sphere $S^{n}\left(o,\sqrt{2n}\right)$.\end{cor}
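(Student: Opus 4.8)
The plan is to prove parts (b), (c), (d) by the device behind Theorem~\ref{thm:selfshrinkereuclidean1} and to treat (a) by a different argument. Since $\Sigma$ is properly immersed it has polynomial volume growth and finite weighted volume, and by \cite{CZ,CZ13} the drifted Laplacian $L=\Delta-\tfrac12\langle x,\nabla\,\cdot\,\rangle$ (with $f=|x|^2/4$) has discrete spectrum, with $1$, the coordinates $x_i$, and $2n-|x|^2$ genuine $L^2_f$-eigenfunctions for the eigenvalues $0,\tfrac12,1$; consequently eigenfunctions for distinct eigenvalues are $L^2_f$-orthogonal. In particular $\int_\Sigma\langle x,v\rangle\,e^{-f}=0$ for every $v\in\mathbb R^{n+1}$, $\int_\Sigma|x|^2 e^{-f}=2n\int_\Sigma e^{-f}$, and $\int_\Sigma(2n-|x|^2)\langle x,v\rangle\,e^{-f}=0$. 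The recipe in each part is to assemble from these eigenfunctions a function on $\Sigma$ whose weighted integral is known exactly and whose sign is forced by the geometric hypothesis: a sign-definite function with vanishing weighted integral must vanish identically, one with the wrong sign of integral cannot exist, and rigidity is then read off.

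For (b) and (c), split $\mathbb R^{n+1}=\mathbb R^{k+1}\times\mathbb R^{n-k}$, write $x=(x',x'')$ and $\nu=(\nu',\nu'')$ accordingly, and set $\tilde v=(v,0)$. Summing $L(x_i^2)=2|\nabla_\Sigma x_i|^2-x_i^2$ over the last $n-k$ indices and using $\sum_i|\nabla_\Sigma x_i|^2=(n-k)-|\nu''|^2$ gives $L(|x''|^2)=2\bigl((n-k)-|\nu''|^2\bigr)-|x''|^2$; integrating against $e^{-f}$ (legitimate by polynomial volume growth, as $\int_\Sigma L(|x''|^2)e^{-f}=0$) yields
\[
\int_\Sigma|x''|^2 e^{-f}=2\int_\Sigma\bigl((n-k)-|\nu''|^2\bigr)e^{-f},\qquad\text{so}\qquad 2(n-k-1)\int_\Sigma e^{-f}\le\int_\Sigma|x''|^2 e^{-f}\le 2(n-k)\int_\Sigma e^{-f}.
\]
Since $|x'|^2-2\langle x,\tilde v\rangle=|x|^2-|x''|^2-2\langle x,\tilde v\rangle$, the moment identities give $\int_\Sigma\bigl(|x'|^2-2\langle x,\tilde v\rangle\bigr)e^{-f}=2n\int_\Sigma e^{-f}-\int_\Sigma|x''|^2 e^{-f}$. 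For (b), the hypothesis $\Sigma\subseteq\overline{B^{k+1}(v,\sqrt{2k+|v|^2})}\times\mathbb R^{n-k}$ says $|x'|^2-2\langle x,\tilde v\rangle\le 2k$ on $\Sigma$, so this integral is $\le 2k\int_\Sigma e^{-f}$; the upper bound on $\int_\Sigma|x''|^2 e^{-f}$ makes it $\ge 2k\int_\Sigma e^{-f}$, so equality holds throughout, forcing $|x'-v|^2\equiv 2k+|v|^2$ on $\Sigma$ and $\nu''\equiv 0$. The vanishing of $\nu''$ means $\{0\}\times\mathbb R^{n-k}\subseteq T_x\Sigma$ for every $x$, so $\Sigma=\Sigma_0\times\mathbb R^{n-k}$ with $\Sigma_0\subseteq\mathbb R^{k+1}$ a $k$-dimensional self-shrinker, which by the first condition lies in the round sphere $S^k(v,\sqrt{2k+|v|^2})$; being properly immersed and of full dimension, $\Sigma_0$ is that whole sphere, and a round $k$-sphere is a self-shrinker only if centered at $o$ with radius $\sqrt{2k}$, so $v=0$. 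For (c), the hypothesis that $\Sigma$ lies outside $\overline{B^{k+1}(v,\sqrt{2(k+1)+|v|^2})}\times\mathbb R^{n-k}$ says $|x'|^2-2\langle x,\tilde v\rangle>2(k+1)$ on $\Sigma$, so the same integral is $>2(k+1)\int_\Sigma e^{-f}$; but the lower bound on $\int_\Sigma|x''|^2 e^{-f}$ makes it $\le 2n\int_\Sigma e^{-f}-2(n-k-1)\int_\Sigma e^{-f}=2(k+1)\int_\Sigma e^{-f}$, a contradiction. (If ``outside'' is read non-strictly, equality holds and the argument of (b) gives $\Sigma=\mathbb R^{k+1}\times\Gamma$, on which $x'$ is unrestricted, incompatible with $|x'-v|^2$ being constant.)

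For (d), take $\phi=(2n-|x|^2)\langle x,p\rangle$; as a product of eigenfunctions for the eigenvalues $1$ and $\tfrac12$, which are $L^2_f$-orthogonal, $\int_\Sigma\phi\,e^{-f}=0$. Say the separating hyperplane is $\{\langle x,p\rangle=0\}$ with $\Sigma\cap\{\langle x,p\rangle\ge 0\}\subseteq\overline{B^{n+1}(o,\sqrt{2n})}$ and $\Sigma\cap\{\langle x,p\rangle\le 0\}$ disjoint from $B^{n+1}(o,\sqrt{2n})$. Then $\phi\ge 0$ on $\Sigma$: where $\langle x,p\rangle\ge 0$ one has $|x|^2\le 2n$, so both factors are nonnegative; where $\langle x,p\rangle\le 0$ one has $|x|^2\ge 2n$, so both factors are nonpositive. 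Hence $\phi\equiv 0$, i.e.\ $\Sigma\subseteq S^n(o,\sqrt{2n})\cup\{\langle x,p\rangle=0\}$. If $\Sigma$ is not contained in the sphere, the open subset $\Sigma\setminus S^n(o,\sqrt{2n})$ of $\Sigma$ lies in the hyperplane $\{\langle x,p\rangle=0\}$; since self-shrinkers are real-analytic and this hyperplane is itself a self-shrinker, unique continuation forces $\Sigma=\{\langle x,p\rangle=0\}$, which is impossible because then $\Sigma\cap\{\langle x,p\rangle\ge 0\}=\Sigma$ is unbounded and cannot lie in $\overline{B^{n+1}(o,\sqrt{2n})}$. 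Therefore $\Sigma\subseteq S^n(o,\sqrt{2n})$ and hence equals it.

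Part (a) falls outside this scheme, and it is the step I expect to be the main obstacle. I would argue via $u=\langle\nu,p\rangle$, which on a self-shrinker satisfies a Colding--Minicozzi-type equation $\Delta_\Sigma u-\tfrac12\langle x,\nabla u\rangle+Wu=0$ with $W$ locally bounded \cite{CM}; if the Gauss image lies in the hemisphere about $p$ then $u\ge 0$, so by the strong maximum principle either $u>0$ everywhere or $u\equiv 0$. When $u>0$, $\Sigma$ is a graph over $p^\perp$, and properness (equivalently polynomial volume growth, \cite{CZ}) should upgrade this to an entire graph, after which the Bernstein theorem for graphical self-shrinkers (L.~Wang) gives $A\equiv 0$, so $\Sigma$ is a hyperplane. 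The case $u\equiv 0$, where $\Sigma$ splits off the line $\mathbb Rp$ as $\Sigma=\Sigma_0\times\mathbb R$ with $\Sigma_0\subseteq p^\perp$ a self-shrinker of dimension $n-1$, is the delicate one: it is realized by the round cylinder $S^{n-1}(o,\sqrt{2(n-1)})\times\mathbb R$, whose Gauss image is an equator and hence lies in a \emph{closed} hemisphere, so in (a) the hypothesis must be understood as placing the Gauss image in an \emph{open} hemisphere, which rules out $u\equiv 0$. Thus the two points needing real care are, for (a), the implication ``$u>0$ and proper $\Rightarrow$ entire graph'' (or a self-contained substitute for the Bernstein theorem) together with this open-versus-closed hemisphere subtlety; and, for (b), the equality-case bookkeeping -- the splitting of $\Sigma$ and the rigidity ``a properly immersed self-shrinker contained in a round $k$-sphere equals that sphere, of radius $\sqrt{2k}$ and center $o$'' are routine, but the argument must genuinely use properness to exclude open pieces of spheres.
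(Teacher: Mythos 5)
Parts (b), (c) and (d) of your proposal are correct and rest on the same mechanism as the paper: discreteness of the spectrum, the fact that $\langle x,v\rangle$ and $2n-|x|^{2}$ are genuine $L^{2}_{f}$ eigenfunctions, and integration by parts justified by finite weighted volume (Lemmas \ref{lem:volume}, \ref{lem:equations}, \ref{lem:integralzero}). The difference is one of bookkeeping: where the paper squeezes a sign-definite function pointwise between $0$ and $-\Delta_{f}u_{1}+4\Delta_{f}u_{2}$ and then applies Lemma \ref{lem:integralzero}, you integrate $\Delta_{f}|x''|^{2}=2\bigl((n-k)-|\nu''|^{2}\bigr)-|x''|^{2}$ together with the moment identities $\int_{M}\langle x,v\rangle e^{-f}=0$ and $\int_{M}|x|^{2}e^{-f}=2n\int_{M}e^{-f}$. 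This buys you the equality information $\nu''\equiv0$ directly, which makes the splitting in (b) cleaner, and your strict-inequality version of (c) does not even need Lemma \ref{lem:sphereselfshrinker}; your (d) coincides with the paper's and in addition supplies the unique-continuation argument that the paper leaves implicit when passing from $u\equiv0$ to ``$M$ is the sphere''. One small request: the step $\int_{M}\Delta_{f}|x''|^{2}e^{-f}=0$ should be justified exactly as in Lemma \ref{lem:integralzero} (cutoffs, $\nabla|x''|^{2}\in L^{2}_{f}$, $\Delta_{f}|x''|^{2}\in L^{1}_{f}$ from Lemma \ref{lem:equations}); ``legitimate by polynomial volume growth'' is not by itself an argument.

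Part (a) is where your proposal has a genuine gap, and it is exactly the point the paper treats by a different idea. You reduce to $u=\langle\nu,p\rangle\ge0$ and $\Delta_{f}u+|A|^{2}u=0$, but then your route needs ``$u>0$ and proper $\Rightarrow$ entire graph'' plus an external Bernstein theorem for entire graphical self-shrinkers, neither of which you prove, and you propose to replace the closed semisphere by an open one to dispose of $u\equiv0$, which changes the statement rather than proving it. The paper needs none of this machinery: if $u\ge0$ is not identically zero, the existence of a nonnegative nontrivial solution of the Jacobi-type equation forces the first eigenvalue of $\Delta_{f}+|A|^{2}$ to be nonnegative, hence the stability inequality $\int_{M}|A|^{2}\phi^{2}e^{-f}\le\int_{M}|\nabla\phi|^{2}e^{-f}$ holds for compactly supported $\phi$; plugging in the standard cutoffs and using finite weighted volume (Lemma \ref{lem:volume}) gives $A\equiv0$, so $M$ is a hyperplane, with no graph property, no Bernstein theorem, and no open-versus-closed distinction at that step. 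This eigenvalue/stability argument is the missing idea. Your observation about the round cylinder does point at the one genuinely delicate spot in the closed-hemisphere statement, namely the case $u\equiv0$ (there $\langle\nu,p\rangle\equiv0$ only forces $M$ to split off the line $\mathbb{R}p$, which the paper dismisses in one sentence), but as it stands your treatment of (a) is a sketch with acknowledged holes, not a proof of the statement as written.
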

\begin{rem}
Item (a) was proved by Ding, Xin and Yang \cite{DXY} by a different
method.
\end{rem}
It is also an interesting problem in geometry to study the mean curvature flow  in more general ambient spaces. In the nonflat case Huisken \cite{Hu87} studied the asymptotic
behavior of certain hypersurfaces evolving under the mean curvature flow immersed in the sphere. Recently, Sheng and Yu \cite{SY} generalized Huisken results to hypersurfaces evolving under the mean curvature flow immersed in Riemannian manifolds evolving under a normalized Ricci flow.

The study of the singularities of the mean curvature flow is related
to $f$-minimal hypersurfaces and submanifolds. It is easy to see
that a submanifold of $\mathbb{R}^{n+1}$ is a self-shrinker if and
only if it is $f$-minimal for $f\left(x\right)=\left|x\right|^{2}/4$.
Recently, Yamamoto \cite{Y} proved that certain type I singularities
of the Ricci-mean curvature flow are $f$-minimal submanifolds. More
precisely, given a gradient shrinking Ricci soliton $\left(\overline{M},g,f\right)$
and $T>0$ consider the canonical solution of the Ricci flow in the
interval $-\infty<t<T$, and given a compact manifold $M$ consider
a family of immersions $F:M\times\left[0,T\right)\to\overline{M}$
evolving under the mean curvature flow. Assuming that the coupled
flow has a certain type I singularity at time $T$, Yamamoto proved
that rescaled flow converges smoothly to a $f$-minimal submanifold
of $\overline{M}$. Thus, a $f$-minimal submanifold of a gradient
shrinking Ricci soliton appears as a singularity of the Ricci-mean
curvature flow in the same way that a self-shrinker submanifold of
$\mathbb{R}^{n+1}$ appears as a singularity of the mean curvature flow. In this paper $f$-minimal submanifolds of gradient shrinking Ricci solitons are also called self-shrinker submanifolds. Note that gradient shrinking Ricci solitons are also certain singularities
of the Ricci flow (see \cite{P} and the recent paper \cite{EMT}).

Recently, Cheng, Mejia and Zhou extended the Choi-Wang estimate and
the Colding-Minicozzi compactness theorem to $f$-minimal surfaces
in \cite{CMZ} and \cite{CMZ3} respectively. In \cite{CMZ2}
and \cite{CZ13-2} they classified index-one self-shrinker hypersurfaces
properly immersed in the gradient shrinking Ricci soliton $\mathbb{R}\times S^{n}\left(o,\sqrt{2\left(n-1\right)}\right)$.

In this paper we focus in the case when the ambient space is a round cylinder gradient shrinking Ricci soliton $\left(\mathbb{R}^{p}\times S_{\sqrt{2\left(k-1\right)}}^{k},g,f\right)$ with product metric $g$ and $f\left(x,y\right)=\frac{\left|x\right|^{2}}{4}$. Round cylinders are important in the study of the mean curvature flow. Colding and Minicozzi \cite{CM} proved that the generic singularities of the mean curvature flow are generalized round cylinders. They are also important in the study of the Ricci flow. Even though it is not known at this moment whether the generic singularities of the Ricci flow are generalized round cylinders, it is known that conformally flat gradient shrinking Ricci solitons are quotients of $S^n$, $\mathbb{R}^n$ and  $\mathbb{R}\times S_{\sqrt{2\left(n-2\right)}}^{n-1}$. In fact, the works of Fern\'andez-L\'opez and Garc\'ia-R\'io \cite{FG2}  and Munteanu-Sesum \cite{MS} led to the classification of gradient shrinking solitons of dimension $n\geq 4$ and harmonic Weyl tensor (vanishing Cotton tensor): they are either  Einstein manifolds or a finite quotient of the product $\mathbb{R}^{n-k} \times N^k$ of the Gaussian shrinking soliton $\mathbb R^{n-k}$ with a positive Einstein manifold $N^k$ where $2\leq k<n$. We would like to point out that some results of this paper holds for more general gradient Ricci solitons. For example, the first part of Theorem  \ref{thm:spectrumss} holds when the $S^k$ factor is replaced by any closed Einstein manifold with Einstein constant $\frac12$ (see Lemma \ref{lem:discrete} for a even more general result).

The idea of using the spectral properties of the drifted Laplacian can
be applied to submanifolds of gradient shrinking Ricci solitons. Cheng, Mejia and
Zhou \cite{CMZ3} proved that for a self-shrinker submanifold of $\mathbb{R}^{p}\times S_{\sqrt{2\left(k-1\right)}}^{k}$
the conditions of proper immersion, polynomial volume growth and finite
weighted volume are equivalent to each other. By this fact, Lemma
\ref{lem:equations} and Lemma \ref{lem:discrete} we have the following
result.
\begin{thm}
\label{thm:spectrumss}Given a self-shrinker submanifold properly
immersed in the gradient shrinking Ricci soliton $\mathbb{R}^{p}\times S_{\sqrt{2\left(k-1\right)}}^{k}$,
the drifted Laplacian of the self-shrinker has discrete spectrum. Moreover, the
spectrum contains the eigenvalues $0$ and $\frac{1}{2}$ with corresponding
eigenfunctions $1$ and $x_{1},\cdots,x_{p}$.\end{thm}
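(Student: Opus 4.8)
The plan is to prove the two assertions separately, each by reducing to one of the lemmas already available. For discreteness of the spectrum, I would first invoke the equivalence theorem of Cheng, Mejia and Zhou: because the self-shrinker is properly immersed in $\mathbb{R}^{p}\times S_{\sqrt{2(k-1)}}^{k}$, it automatically has polynomial volume growth and finite weighted volume. These are exactly the hypotheses of Lemma \ref{lem:discrete}, so applying it gives at once that the drifted Laplacian $\Delta_{f}=\Delta-\langle\nabla f,\nabla\,\cdot\,\rangle$ has discrete spectrum. (Nothing here uses the round sphere beyond its being a closed manifold with $\ric=\tfrac12 g$, which is why the first assertion remains valid with $S^{k}$ replaced by any such Einstein factor.)

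For the eigenfunctions, the engine is Lemma \ref{lem:equations}, which computes the action of $\Delta_{f}$ on the ambient coordinate functions restricted to the self-shrinker. Write an ambient point as $(x,y)$ with $x\in\mathbb{R}^{p}$, $y\in S_{\sqrt{2(k-1)}}^{k}$, so that $f=|x|^{2}/4$, $\bar\nabla f=\tfrac12(x,0)$, and the self-shrinker equation is $\overrightarrow{H}+(\bar\nabla f)^{\perp}=0$. The function $1$ trivially satisfies $\Delta_{f}1=0$. For a linear coordinate $x_{i}$, extended to the product and restricted to $M$, the ambient Hessian vanishes, so $\Delta_{M}x_{i}=\langle\overrightarrow{H},\partial_{x_{i}}\rangle$; writing $\langle\nabla^{M}f,\nabla^{M}x_{i}\rangle=\tfrac12 x_{i}-\langle(\bar\nabla f)^{\perp},\partial_{x_{i}}\rangle$ and substituting the self-shrinker equation, the normal terms cancel and $\Delta_{f}x_{i}=-\tfrac12 x_{i}$. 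Hence $1$ and $x_{1},\dots,x_{p}$ solve the eigenvalue equation with eigenvalues $0$ and $\tfrac12$.

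It remains to see that these classical solutions are genuine eigenfunctions of the self-adjoint extension, which amounts to an integrability check. Finite weighted volume gives $1\in L_{f}^{2}(M)$. Since $S^{k}$ is compact, properness of the immersion forces $|x|\to\infty$ along the ends of $M$, so the weight $e^{-f}$ decays like a Gaussian in $|x|$; together with polynomial volume growth this yields $\int_{M}|x|^{2m}e^{-f}<\infty$ for every $m$, whence $x_{i}\in L_{f}^{2}$ and $\Delta_{f}x_{i}=-\tfrac12 x_{i}\in L_{f}^{2}$, while $|\nabla^{M}x_{i}|\le1$ gives $\nabla x_{i}\in L_{f}^{2}$. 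Thus each $x_{i}$ lies in the domain, the weighted integration by parts needed to interpret the eigenvalue equation in $L_{f}^{2}$ is legitimate, and $x_{i}$ is an eigenfunction with eigenvalue $\tfrac12$.

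I expect the only genuine difficulty to be the one packaged in Lemma \ref{lem:discrete}, namely deducing discreteness of the drifted spectrum from polynomial volume growth --- the analogue of the Cheng-Zhou theorem for $\mathbb{R}^{n+1}$, which typically needs a weighted Poincar\'e-type inequality or a quantitative decay estimate for the weight along the submanifold. Granting that lemma, the present theorem is essentially bookkeeping; the one point that does require care is lining up the integrability hypotheses, and this is exactly where the properness $\Longleftrightarrow$ polynomial volume growth equivalence of Cheng, Mejia and Zhou enters.
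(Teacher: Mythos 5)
Your proposal is correct and takes essentially the same route as the paper: Lemma \ref{lem:volume} supplies finite weighted volume and polynomial volume growth, Lemma \ref{lem:discrete} gives discreteness, and your coordinate computation together with the $L_{f}^{2}$ integrability estimates is exactly the content of Lemmas \ref{lem:characterization} and \ref{lem:equations}. One small correction: the hypotheses of Lemma \ref{lem:discrete} are proper immersion and convexity of $f$ (here $f\left(x,y\right)=\left|x\right|^{2}/4$ is convex since its ambient Hessian is $\frac{1}{2}g_{\mathbb{R}^{p}}\geq 0$), not polynomial volume growth or finite weighted volume, which enter only in the integrability check for the eigenfunctions.
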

\begin{rem}
For $k=0$ the corresponding result for self-shrinker submanifolds properly immersed in $\mathbb{R}^{p}$ was proved in \cite{CZ13}. The proof uses a logarithmic Sobolev inequality obtained by Ecker \cite{E}. This inequality holds when the ambient space is a gradient shrinking Ricci soliton with bounded geometry \cite{W}. Here we give an alternative proof of the result in \cite{CZ13}.

Applying spectral properties of the drifted Laplacian we prove the
following result.\end{rem}
\begin{thm}
\label{thm:selfshrinkercylinder}Any self-shrinker hypersurface properly immersed in the gradient shrinking Ricci soliton $\mathbb{R}^{n+1-k}\times S_{\sqrt{2\left(k-1\right)}}^{k}$
satisfies the following geometric properties:

(a) If the self-shrinker lies inside the closed product $\overline{B_{\sqrt{2\left(n-k\right)}}^{n+1-k}}\times S_{\sqrt{2\left(k-1\right)}}^{k}$,
then it is the product $S_{\sqrt{2\left(n-k\right)}}^{n-k}\times S_{\sqrt{2\left(k-1\right)}}^{k}$.

(b) The self-shrinker cannot lie outside the closed product
$\overline{B_{\sqrt{2\left(n+1-k\right)}}^{n+1-k}}\times S_{\sqrt{2\left(k-1\right)}}^{k}$.

(c) If the self-shrinker lies in the product of a closed halfspace in $\mathbb{R}^{n+1-k}$
passing through the origin with $S_{\sqrt{2\left(k-1\right)}}^{k}$,
then it is the product of the separating hyperplane with $S_{\sqrt{2\left(k-1\right)}}^{k}$.
\end{thm}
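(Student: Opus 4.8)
The plan is to run, in the cylinder setting, the same drifted‑Laplacian argument that proves Theorem \ref{thm:selfshrinkereuclidean1}. Write $m:=n+1-k$ for the Euclidean dimension, so the soliton is $\overline M=\mathbb R^{m}\times S^{k}_{\sqrt{2(k-1)}}$ with $f(x,y)=|x|^{2}/4$; let $\Sigma^{n}\subset\overline M$ be the self‑shrinker hypersurface with unit normal $\nu$, let $x_{1},\dots,x_{m}$ denote the Euclidean coordinates restricted to $\Sigma$, and set $|\nu^{\mathbb R}|^{2}:=\sum_{i=1}^{m}\langle\overline\nabla x_{i},\nu\rangle^{2}\le 1$, the squared length of the $\mathbb R^{m}$–component of $\nu$. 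From $\Delta_{f}x_{i}=-\tfrac12 x_{i}$ (Lemma \ref{lem:equations}), squaring and summing, and using $\sum_{i}|\nabla^{\Sigma}x_{i}|^{2}=m-|\nu^{\mathbb R}|^{2}$, I would first record the identity
\[
\Delta_{f}|x|^{2}=-|x|^{2}+2(n+1-k)-2|\nu^{\mathbb R}|^{2}.
\]
The second input is that a properly immersed self‑shrinker in $\overline M$ has finite weighted volume and polynomial volume growth \cite{CMZ3}, so $\int_{\Sigma}\Delta_{f}\varphi\,e^{-f}=0$ for any $\varphi$ with $\varphi,\nabla\varphi$ of at most polynomial growth (equivalently $\Sigma$ is $f$‑parabolic); this applies to $\varphi=\langle x,p\rangle$ and to $\varphi=|x|^{2}$. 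I will also use repeatedly that a properly immersed self‑shrinker contained in a connected codimension‑one submanifold $N\subset\overline M$ must coincide with $N$ (it is open in $N$ since the immersion is between equidimensional manifolds, and closed in $N$ by properness).

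Part (c) is cleanest. The hypothesis says $h:=\langle x,p\rangle\ge 0$ on $\Sigma$ for a unit vector $p\in\mathbb R^{m}$. By Theorem \ref{thm:spectrumss}, $h$ is an eigenfunction of $\Delta_{f}$ with eigenvalue $\tfrac12\neq 0$, hence $L^{2}_{f}$‑orthogonal to the constant eigenfunction, so $\int_{\Sigma}h\,e^{-f}=0$; with $h\ge 0$ this forces $h\equiv 0$. Thus $\Sigma$ lies in $\{\langle x,p\rangle=0\}\times S^{k}_{\sqrt{2(k-1)}}$, a connected $n$‑dimensional submanifold, and therefore equals it — the product of the separating hyperplane with $S^{k}_{\sqrt{2(k-1)}}$.

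For parts (a) and (b) I would integrate the displayed formula, obtaining
\[
\int_{\Sigma}\bigl(-|x|^{2}+2(n+1-k)-2|\nu^{\mathbb R}|^{2}\bigr)e^{-f}=0 .
\]
In case (a), $\Sigma\subset\overline{B^{n+1-k}_{\sqrt{2(n-k)}}}\times S^{k}_{\sqrt{2(k-1)}}$ means $|x|^{2}\le 2(n-k)$; together with $|\nu^{\mathbb R}|^{2}\le 1$ the integrand is $\ge -2(n-k)+2(n+1-k)-2=0$, so it vanishes identically and both inequalities are saturated: $|x|^{2}\equiv 2(n-k)$ and $|\nu^{\mathbb R}|^{2}\equiv 1$. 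Hence $\Sigma\subset S^{n-k}_{\sqrt{2(n-k)}}\times S^{k}_{\sqrt{2(k-1)}}$, and by the codimension‑one remark $\Sigma$ equals this product, which is a self‑shrinker (the product of the round self‑shrinker sphere of $\mathbb R^{n+1-k}$ with the whole $S^{k}$‑factor). In case (b), suppose for contradiction that $|x|^{2}\ge 2(n+1-k)$ on $\Sigma$; then the integrand is $\le -2(n+1-k)+2(n+1-k)-0=0$, so again it vanishes identically, giving $|x|^{2}\equiv 2(n+1-k)$ and $|\nu^{\mathbb R}|^{2}\equiv 0$. Thus $\Sigma$ lies in the \emph{compact} set $S^{n-k}_{\sqrt{2(n+1-k)}}\times S^{k}_{\sqrt{2(k-1)}}$ and, being proper, equals it; but $S^{n-k}_{\sqrt{2(n+1-k)}}$ has the wrong radius to be a self‑shrinker in $\mathbb R^{n+1-k}$ (that one has radius $\sqrt{2(n-k)}$), so this product is not a self‑shrinker — a contradiction.

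I expect the only genuine obstacles to be technical: justifying $\int_{\Sigma}\Delta_{f}|x|^{2}\,e^{-f}=0$ for the unbounded test function $|x|^{2}$ (a standard cutoff argument using finite weighted volume and polynomial volume growth, as in \cite{CZ,CMZ3}, since $|x|^{2}$ and $\nabla|x|^{2}$ grow polynomially), and confirming that a properly immersed self‑shrinker contained in a connected hypersurface of $\overline M$ fills it out. Everything else is the sign bookkeeping above, the relevant numerical coincidences being that $2(n-k)$ is exactly the squared radius at which the round sphere of $\mathbb R^{n+1-k}$ becomes a self‑shrinker, while $2(n+1-k)$ is one unit larger.
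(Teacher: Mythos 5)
Your proposal is correct and follows essentially the same route as the paper: part (c) is verbatim the paper's eigenfunction--orthogonality argument, and parts (a)--(b) integrate the drifted Laplacian of $|x|^{2}$ against $e^{-f}$ using finite weighted volume (Lemma \ref{lem:volume}, Lemma \ref{lem:equations}, Lemma \ref{lem:integralzero}). The only cosmetic difference is that you integrate the exact identity of Lemma \ref{lem:equations}(b) and read off rigidity from the vanishing integrand, whereas the paper packages the same computation as the one-sided inequalities (c)--(d) applied to $u=2(n-k)-|x|^{2}$ and $u=|x|^{2}-2(n+1-k)$.
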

The paper is organized as follows. In Section 2 we present the basic conventions and notations. In Section 3 we prove some spectral properties of the drifted Laplacian. In Section 4 we prove Theorem \ref{thm:selfshrinkereuclidean1}, Corollary \ref{cor:selfshrinkereuclidean2} and Theorem \ref{thm:selfshrinkercylinder}.

\section{Conventions and notations}

In this section we recall some definitions and present the conventions and notations of the paper.

Recall that a smooth metric measure space is a triple $\left(M,g,f\right)$ of a Riemannian manifold $\left(M,g\right)$ and a smooth function $f$ on $M$. The weighted volume is the measure $e^{-f}dvol$. The drifted Laplacian is defined by 
\[
\Delta_{f}=\Delta-\left\langle \nabla f,\nabla\cdot\right\rangle .
\]
The $L_{f}^{2}\left(M\right)$ inner product of functions $u$ and $v$ on $M$ is defined by 
\[
\left\langle u,v\right\rangle _{L_{f}^{2}\left(M\right)}=\int_{M}uve^{-f}.
\]
It is well known that the drifted Laplacian is a densely defined self-adjoint
operator in $L_{f}^{2}\left(M\right)$, i.e. for smooth functions
$u$ and $v$ on $M$ with compact we have
\[
\int_{M}\Delta_{f}u\cdot ve^{-f}=-\int_{M}\left\langle \nabla u,\nabla v\right\rangle e^{-f}.
\]
The Bakry-\'Emery-Ricci curvature is defined by 
\[
\ric_{f}=\ric+\nabla\nabla f.
\]
A gradient Ricci soliton is a smooth metric measure space with
\[
\ric_{f}=cg
\]
for some constant $c$. It is shrinking, steady or expanding if this constant is positive, zero or negative respectively.

Recall that for a submanifold $M$ of a smooth metric measure space $\left(\overline{M},g,f\right)$ the second fundamental form is defined by
\[
A\left(X,Y\right)=\left(\overline{\nabla}_{X}Y\right)^{\perp}.
\]
Here we consider the Riemannian connection $\overline{\nabla}$ of the ambient space $\overline{M}$ and the projection $\perp$ to the normal bundle. The mean curvature vector is defined by 
\[
\overrightarrow{H}=\tr A.
\]
The submanifold is called $f$-minimal if
\[
\overrightarrow{H}+\left(\overline{\nabla}f\right)^{\perp}=0.
\]
It is well known that $M$ is $f$-minimal if and only if it is a
critical point of the weighted volume functional $vol_{f}\left(M\right)=\int_{M}e^{-f}$
with respect to compactly supported normal variations (e.g. \cite{CMZ3}). 
Note that $\left(M,g,f\right)$ is also a smooth metric measure space and drifted Laplacian of $M$ is given by
\[
\Delta_{f}=\Delta-\left\langle \nabla f,\nabla\cdot\right\rangle .
\]
Here we consider the Riemannian connection $\nabla$ and the Laplacian
$\Delta$ of the submanifold $M$.

In this paper by a self-shrinker submanifold we mean a $f$-minimal submanifold of gradient shrinking Ricci soliton. We usually denote the geometric quantities of the ambient space with a bar over them. The open ball $B^{k+1}\left(v,r\right)$ in $\mathbb{R}^{k+1}$ of radius $r$ and center $v$ has boundary $S^{k}\left(v,r\right)$ and closure $\overline{B^{k+1}\left(v,r\right)}$. The notation of closure for balls should not be confused with the notation of the ambient space. Sometimes we write $B_{r}^{k+1}=B^{k+1}\left(o,r\right)$ and $S_{r}^{k}=S^{k}\left(o,r\right)$.

\begin{example}
Consider the ambient space $\mathbb{R}^{p}$ with canonical metric
$g$ and the function $f\left(x\right)=\frac{\left|x\right|^{2}}{4}$.
The triple $\left(\mathbb{R}^{p},g,f\right)$ is a gradient shrinking
Ricci soliton with Bakry-\'Emery-Ricci curvature 
\[
\overline{\ric}_f=\frac{1}{2}g.
\]
A submanifold $M$ of $\mathbb{R}^{p}$ is a self-shrinker if and
only if
\[
\overrightarrow{H}+\frac{1}{2}x^{\perp}=0.
\]
The drifted Laplacian of $M$ is given by 
\[
\Delta_{f}=\Delta-\frac{1}{2}\left\langle x,\nabla\cdot\right\rangle .
\]
The operator $\Delta_{f}$ is equal to the operator $\mathcal{L}$
introduced by Colding and Minicozzi \cite{CM}.
\end{example}

\begin{example}
Consider the ambient space $\mathbb{R}^{p}\times S_{\sqrt{2\left(k-1\right)}}^{k}$
with product metric $g$ and the function $f\left(x,y\right)=\frac{\left|x\right|^{2}}{4}$.
Here $x$ is the position vector in $\mathbb{R}^{p}$ and $y$ is
the position vector in $\mathbb{R}^{k+1}$. The triple $\left(\mathbb{R}^{p}\times S_{\sqrt{2\left(k-1\right)}}^{k},g,f\right)$
is a gradient shrinking Ricci soliton with Bakry-\'Emery-Ricci curvature
\begin{align*}
\overline{\ric}_{f} & =\overline{\nabla}\overline{\nabla}f+\overline{\ric}\\
 & =\frac{1}{2}g_{\mathbb{R}^{p}}+\frac{1}{2}g_{S_{\sqrt{2\left(k-1\right)}}^{k}}\\
 & =\frac{1}{2}g.
\end{align*}
A submanifold $M$ of $\mathbb{R}^{p}\times S_{\sqrt{2\left(k-1\right)}}^{k}$
is a self-shrinker if and only if
\[
\overrightarrow{H}+\frac{1}{2}x^{\perp}=0.
\]
The drifted Laplacian of $M$ is given by 
\[
\Delta_{f}=\Delta-\frac{1}{2}\left\langle x,\nabla\cdot\right\rangle .
\]

\end{example}

\section{Spectral properties of the drifted Laplacian}

In this section, first we characterize self-shrinker submanifolds
of the gradient shrinking Ricci soliton $\mathbb{R}^{p}\times S_{\sqrt{2\left(k-1\right)}}^{k}$.
In particular, for $k=0$ a submanifold of $\mathbb{R}^{p}$ is a
self-shrinker if and only if the restrictions of the cartesian coordinates
of $\mathbb{R}^{p}$ are eigenfunctions of the drifted Laplacian of
the submanifold with eigenvalue $\frac{1}{2}$. Then we prove some
differential equations and differential inequalities for self-shrinker
submanifolds properly immersed in $\mathbb{R}^{p}\times S_{\sqrt{2\left(k-1\right)}}^{k}$.
These identities and inequalities are used in the proofs of the results
of next section. Then we prove that for self-shrinker submanifolds
properly immersed in certain gradient shrinking Ricci solitons the
spectrum of the drifted Laplacian of the submanifold is discrete.
This result can be applied to self-shrinker submanifolds properly immersed
in $\mathbb{R}^{p}\times S_{\sqrt{2\left(k-1\right)}}^{k}$. Finally,
we prove a useful lemma for the drifted Laplacian of $f$-parabolic
smooth metric measure spaces.
\begin{lem}
\label{lem:volume}For a complete self-shrinker submanifold $M$ of
the gradient shrinking Ricci soliton $\mathbb{R}^{p}\times S_{\sqrt{2\left(k-1\right)}}^{k}$
the conditions proper immersion, polynomial volume growth and finite
weighted volume (i.e. $\int_{M}e^{-f}<\infty$) are equivalent to
each other.\end{lem}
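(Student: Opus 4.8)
The plan is to establish the following cycle of implications: polynomial volume growth $\Rightarrow$ finite weighted volume $\Rightarrow$ proper immersion $\Rightarrow$ polynomial volume growth. Throughout, since the $S^{k}$ factor has bounded diameter, all three conditions are governed by the $\mathbb{R}^{p}$-coordinate $x$, and one may replace ``extrinsic ball in the ambient space'' by the compact cylindrical sublevel set $\{|x|\le r\}=\overline{B^{p}_{r}}\times S^{k}_{\sqrt{2(k-1)}}$; note also that $|x|$ is $1$-Lipschitz on the ambient space and hence on $M$. Write $m=\dim M$.

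\emph{Into finite weighted volume.} If $M$ has polynomial volume growth, decompose $M$ into the annuli $A_{j}=M\cap\{j\le|x|<j+1\}$; on $A_{j}$ we have $e^{-f}\le e^{-j^{2}/4}$ and $\mathrm{Vol}(A_{j})\le C(1+j)^{d}$, so $\int_{M}e^{-f}\le\sum_{j\ge 0}C(1+j)^{d}e^{-j^{2}/4}<\infty$. If instead $M$ is properly immersed, put $\Phi(r)=\int_{M\cap\{|x|\le r\}}e^{-f}$, which is finite for each $r$ by properness. Applying the weighted divergence theorem to $|x|^{2}$ on the compact region $M\cap\{|x|\le r\}$ gives, for a.e.\ $r$,
\[
\int_{M\cap\{|x|\le r\}}\tfrac12\,\Delta_{f}(|x|^{2})\,e^{-f}\,d\mu
= e^{-r^{2}/4}\int_{M\cap\{|x|=r\}}\big|\nabla^{M}(|x|^{2})\big|\,d\sigma\ \ge\ 0 .
\]
Since $\Delta_{f}(|x|^{2})\le 2m-|x|^{2}$ by Lemma \ref{lem:equations}, the left-hand side is at most $\int_{M\cap\{|x|\le r\}}(m-\tfrac12|x|^{2})e^{-f}$, so $\int_{M\cap\{|x|\le r\}}|x|^{2}e^{-f}\le 2m\,\Phi(r)$. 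Splitting $\{|x|\le r\}$ at $|x|^{2}=4m$ and using $e^{-f}<\frac{|x|^{2}}{4m}e^{-f}$ on $\{|x|^{2}>4m\}$ yields $\Phi(r)\le\Phi(2\sqrt m)+\tfrac12\Phi(r)$ for $r\ge 2\sqrt m$, hence $\int_{M}e^{-f}\le 2\Phi(2\sqrt m)<\infty$.

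\emph{Finite weighted volume $\Rightarrow$ proper.} Suppose the immersion $F$ is not proper; then $F^{-1}(\{|x|\le R\})$ is noncompact for some $R$, so by completeness it contains a sequence $p_{i}$ diverging in $M$, and after passing to a subsequence the intrinsic balls $B^{M}_{\delta}(p_{i})$ are pairwise disjoint for a fixed $\delta>0$. Since $|x|$ is $1$-Lipschitz on $M$, we have $|x|\le R+\delta$ on each $B^{M}_{\delta}(p_{i})$, so $e^{-f}\ge e^{-(R+\delta)^{2}/4}$ there; and, viewing $M$ inside $\mathbb{R}^{p}\times\mathbb{R}^{k+1}=\mathbb{R}^{p+k+1}$, its mean curvature vector there equals $-\tfrac12 x^{\perp}$ plus the mean curvature vector of $N$ in $\mathbb{R}^{p+k+1}$ (of norm at most $m/\sqrt{2(k-1)}$), so $|\overrightarrow{H}|\le\Lambda(R)$ on these balls. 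The monotonicity formula for submanifolds of bounded mean curvature then gives a uniform lower bound $\mathrm{Vol}(B^{M}_{\delta}(p_{i}))\ge c(R,\delta)>0$, whence $\int_{M}e^{-f}\ge\sum_{i}e^{-(R+\delta)^{2}/4}c(R,\delta)=\infty$, a contradiction.

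\emph{Finite weighted volume $\Rightarrow$ polynomial volume growth.} This is the main obstacle, and the one place where first-order information about $|x|^{2}$ is insufficient: the elementary estimate $\mathrm{Vol}(M\cap\{|x|\le r\})\le e^{r^{2}/4}\int_{M}e^{-f}$ is only exponential. To upgrade it I would invoke the logarithmic Sobolev inequality of Ecker \cite{E}, which holds for hypersurfaces of a gradient shrinking Ricci soliton with bounded geometry \cite{W}, and $\mathbb{R}^{p}\times S^{k}_{\sqrt{2(k-1)}}$ has bounded geometry. Applied to a cutoff function $\eta$ supported where $|x|\le 2r$, together with the self-shrinker equation $\overrightarrow{H}+\tfrac12 x^{\perp}=0$ (which makes the extrinsic defect term vanish), it bounds $\int_{M}\eta^{2}\log\eta^{2}\,e^{-f}$ in terms of $\int_{M}e^{-f}$; a mean value (Moser iteration) argument then produces a bound for the weighted volume of unit balls that is uniform in the center, and covering $\{|x|\le r\}$ by $O(r^{p})$ such balls gives $\mathrm{Vol}(M\cap\{|x|\le r\})\le C(1+r)^{p+m}$. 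Equivalently, one may use the weighted monotonicity formula for the Gaussian density of the self-shrinker to obtain $\int_{M}e^{-|x-v|^{2}/4}\,d\mu\le\int_{M}e^{-f}$ for every $v\in\mathbb{R}^{p}$ and conclude in the same way. This analytic input is exactly the part that must be carried over from \cite{CZ} and \cite{CMZ3}; the remaining implications, as sketched, are elementary.
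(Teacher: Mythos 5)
Your proposal is a reconstruction of the cited result rather than of the paper's argument: the paper proves this lemma in one line by quoting Corollary 1 of \cite{CMZ3} (the cylinder is a gradient shrinking soliton and $f(x,y)=|x|^{2}/4$ is convex). Two of your legs are sound in outline. Polynomial growth implies finite weighted volume exactly as you say, and proper implies finite weighted volume via the weighted divergence theorem on the compact sets $M\cap\{|x|\le r\}$ is the standard Cheng--Zhou-type computation (the constant $2m$ versus $2p$ is immaterial, and using the pointwise identities of Lemma \ref{lem:equations} here is legitimate since they do not need properness). For finite weighted volume implies proper, be careful with the volume lower bound you invoke: the classical monotonicity formula concerns extrinsic balls, and its no-boundary hypothesis can fail for the images of the intrinsic balls $B^{M}_{\delta}(p_{i})$ (points at intrinsic distance $\delta$ may be extrinsically close to $F(p_{i})$). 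The fact you actually need is the lower bound $\mathrm{Vol}(B^{M}_{\delta}(p))\ge c(m,\Lambda,\delta)>0$ for \emph{intrinsic} balls of an immersion with $|H|\le\Lambda$, which one gets, e.g., by integrating $\Delta_{M}|F-F(p)|^{2}\ge 2m-2\Lambda|F-F(p)|$ over intrinsic balls; with that (plus Hopf--Rinow to extract intrinsically divergent $p_{i}$ with disjoint balls) the contradiction goes through.

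The genuine gap is the leg that produces polynomial volume growth, which you yourself flag as the main obstacle and then only sketch. The entropy-type bound $\int_{M}e^{-|x-v|^{2}/4}\le\int_{M}e^{-f}$ is a theorem of Colding--Minicozzi for shrinkers in $\mathbb{R}^{n+1}$ whose proof uses Huisken's monotonicity under a properness/polynomial-volume-growth hypothesis, so invoking it here is circular in spirit, and no cylinder analogue is on hand. The log-Sobolev inequality of \cite{E}, \cite{W} is the ingredient for spectral discreteness in \cite{CZ13}, and it is not clear how it would yield uniform \emph{unweighted} area bounds on unit balls; note that a uniform bound on $\int_{M\cap B(q,1)}e^{-f}$ only gives $\mathrm{Vol}(M\cap B(q,1))\le Ce^{(|q|+1)^{2}/4}$, so your covering argument does not close as written. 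The standard and elementary way to finish is the implication proper $\Rightarrow$ polynomial volume growth, obtained by refining the computation you already carried out: apply the divergence theorem to $|x|^{2}$ on $M\cap\{|x|\le r\}$ so as to derive a differential inequality for $V(r)=\mathrm{Vol}(M\cap\{|x|\le r\})$ that integrates to $V(r)\le Cr^{d}$. That is precisely the content of \cite{CZ} and of Corollary 1 in \cite{CMZ3}; without it (or an honest substitute), your proposed proof leaves one of the three equivalences unestablished.
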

\begin{proof}
The conclusion follows from Corollary 1 in \cite{CMZ3} because $\mathbb{R}^{p}\times S_{\sqrt{2\left(k-1\right)}}^{k}$
is a gradient shrinking Ricci soliton and the function $f\left(x,y\right)=\frac{\left|x\right|^{2}}{4}$
is convex.\end{proof}
\begin{lem}
\label{lem:characterization}A submanifold $M$ of the gradient shrinking
Ricci soliton $\mathbb{R}^{p}\times S_{\sqrt{2\left(k-1\right)}}^{k}$
is a self-shrinker if and only if the cartesian coordinates $x_{1},\dots,x_{p}$
of $\mathbb{R}^{p}$ are eigenfunctions of drifted Laplacian of $M$
with eigenvalue $\frac{1}{2}$ i.e.
\[
\Delta_{f}x_{i}=-\frac{1}{2}x_{i},
\]
and the cartesian coordinates $y_{1},\dots,y_{k+1}$ of $\mathbb{R}^{k+1}$
are eigenfunctions of drifted Laplacian of $M$ with eigenvalue $\frac{1}{2\left(k-1\right)}\tr_{M}g_{S_{\sqrt{2\left(k-1\right)}}^{k}}$
i.e.
\[
\Delta_{f}y_{i}=-\left(\frac{1}{2\left(k-1\right)}\tr_{M}g_{S_{\sqrt{2\left(k-1\right)}}^{k}}\right)y_{i}.
\]
\end{lem}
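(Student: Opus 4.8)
The plan is to evaluate the drifted Laplacian $\Delta_{f}$ of $M$ on the restrictions of the ambient coordinate functions $x_{i}$ and $y_{j}$, and to show that in each case one obtains exactly the asserted eigenvalue term plus an error term of the form $\langle\overrightarrow{H}+\tfrac12 x^{\perp},\,\cdot\,\rangle$. Since $\overrightarrow{H}+\tfrac12 x^{\perp}$ is precisely the defect in the self-shrinker equation $\overrightarrow{H}+\tfrac12 x^{\perp}=0$ on this ambient space, both implications of the lemma will then drop out.

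First I would record the following general restriction formula. Let $M\hookrightarrow\overline{M}$ be an isometric immersion into a smooth metric measure space $(\overline{M},g,\bar f)$, let $\{\nu_{\alpha}\}$ be a local orthonormal frame of the normal bundle $NM$ of $M$ in $\overline{M}$, and set $\overline{\Delta}_{\bar f}=\overline{\Delta}-\langle\overline{\nabla}\bar f,\overline{\nabla}\,\cdot\,\rangle$. Then for $\bar u\in C^{\infty}(\overline{M})$ and $u=\bar u|_{M}$,
\[
\Delta_{f}u=\bigl(\overline{\Delta}_{\bar f}\bar u\bigr)\big|_{M}-\sum_{\alpha}\overline{\nabla}^{2}\bar u(\nu_{\alpha},\nu_{\alpha})+\bigl\langle\overrightarrow{H}+(\overline{\nabla}\bar f)^{\perp},\ \overline{\nabla}\bar u\bigr\rangle .
\]
This follows at once from the standard identity $\Delta_{M}u=\operatorname{tr}_{M}\overline{\nabla}^{2}\bar u+\langle\overrightarrow{H},\overline{\nabla}\bar u\rangle$, the facts $\nabla^{M}u=(\overline{\nabla}\bar u)^{\top}$ and $\nabla^{M}f=(\overline{\nabla}\bar f)^{\top}$, and splitting each inner product into tangential and normal parts. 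In our situation $\overline{M}=\mathbb{R}^{p}\times S_{\sqrt{2(k-1)}}^{k}$ and $\bar f=|x|^{2}/4$, so $(\overline{\nabla}\bar f)^{\perp}=\tfrac12 x^{\perp}$ and the error term is the self-shrinker defect paired with $\overline{\nabla}\bar u$.

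Next I would substitute the two families. For $\bar u=x_{i}$ the vector field $\overline{\nabla}x_{i}=\partial_{x_{i}}$ is parallel on the product, so $\overline{\nabla}^{2}x_{i}=0$ and $\overline{\Delta}x_{i}=0$, while $\langle\overline{\nabla}\bar f,\overline{\nabla}x_{i}\rangle=\tfrac12 x_{i}$; the formula gives $\Delta_{f}x_{i}=-\tfrac12 x_{i}+\langle\overrightarrow{H}+\tfrac12 x^{\perp},\partial_{x_{i}}\rangle$. For $\bar u=y_{j}$, which depends only on the spherical factor, I would invoke the classical facts about linear functions restricted to a round sphere: $\overline{\nabla}y_{j}=\nabla^{S^{k}}y_{j}$, $\overline{\Delta}y_{j}=\Delta_{S^{k}}y_{j}=-\tfrac{k}{2(k-1)}y_{j}$, $\langle\overline{\nabla}\bar f,\overline{\nabla}y_{j}\rangle=0$ (orthogonal factors), and the key input $\overline{\nabla}^{2}y_{j}=-\tfrac{1}{2(k-1)}\,y_{j}\,g_{S^{k}}$, where $g_{S^{k}}=g_{S_{\sqrt{2(k-1)}}^{k}}$ is regarded as a symmetric $2$-tensor on $\overline{M}$, zero on the $\mathbb{R}^{p}$ directions. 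Then $\sum_{\alpha}\overline{\nabla}^{2}y_{j}(\nu_{\alpha},\nu_{\alpha})=-\tfrac{1}{2(k-1)}\,y_{j}\operatorname{tr}_{NM}g_{S^{k}}$, and since $\operatorname{tr}_{T\overline{M}}g_{S^{k}}=k$ and $T\overline{M}=TM\oplus NM$ we get $\operatorname{tr}_{NM}g_{S^{k}}=k-\operatorname{tr}_{M}g_{S^{k}}$. Substituting, the two spherical contributions collapse to $-\bigl(\tfrac{1}{2(k-1)}\operatorname{tr}_{M}g_{S^{k}}\bigr)y_{j}$, so $\Delta_{f}y_{j}=-\bigl(\tfrac{1}{2(k-1)}\operatorname{tr}_{M}g_{S^{k}}\bigr)y_{j}+\langle\overrightarrow{H}+\tfrac12 x^{\perp},\nabla^{S^{k}}y_{j}\rangle$.

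Finally I would read off the equivalence. If $M$ is a self-shrinker, then $\overrightarrow{H}+\tfrac12 x^{\perp}=0$, both error terms vanish, and the two stated eigenvalue equations hold. Conversely, if these equations hold for all $i$ and $j$, then $\overrightarrow{H}+\tfrac12 x^{\perp}$ is orthogonal to every $\partial_{x_{i}}$ and every $\nabla^{S^{k}}y_{j}$; but $\{\partial_{x_{i}}\}_{i=1}^{p}\cup\{\nabla^{S^{k}}y_{j}\}_{j=1}^{k+1}$ spans $T\overline{M}$ at each point (the first set spans the Euclidean factor, the second spans $TS^{k}$ because $dy_{1},\dots,dy_{k+1}$ span $T^{*}S^{k}$), and $\overrightarrow{H}+\tfrac12 x^{\perp}$ is itself a section of $T\overline{M}$, hence it vanishes, i.e. $M$ is a self-shrinker. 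I expect the only step requiring genuine care is the $y_{j}$ computation: one must keep straight the several orthogonal complements in play --- normal to $S^{k}$ inside $\mathbb{R}^{k+1}$, normal to $M$ inside $\overline{M}$, and the product splitting $T\overline{M}=T\mathbb{R}^{p}\oplus TS^{k}$ --- and correctly account for the trace identity $\operatorname{tr}_{NM}g_{S^{k}}=k-\operatorname{tr}_{M}g_{S^{k}}$ that produces the stated eigenvalue; the remainder is direct substitution into the restriction formula.
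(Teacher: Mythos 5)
Your proposal is correct and follows essentially the same route as the paper: the same restriction formula isolating the defect term $\langle\overrightarrow{H}+(\overline{\nabla}f)^{\perp},\overline{\nabla}\bar u\rangle$, the same key identity $\overline{\nabla}\overline{\nabla}y_{j}=-\tfrac{1}{2(k-1)}y_{j}\,g_{S_{\sqrt{2(k-1)}}^{k}}$, and the same converse argument that the coordinate gradients span $T\overline{M}$ so the defect vanishes. The only cosmetic differences are that you route the tangential trace through the ambient Laplacian and the normal-bundle trace (via $\operatorname{tr}_{NM}g_{S^{k}}=k-\operatorname{tr}_{M}g_{S^{k}}$) and quote the sphere Hessian identity as a classical fact, whereas the paper traces directly over $TM$ and derives that identity from the second fundamental form of the cylinder in $\mathbb{R}^{p}\times\mathbb{R}^{k+1}$.
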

\begin{proof}
By direct calculation for any function $u$ on $\mathbb{R}^{p}\times S_{\sqrt{2\left(k-1\right)}}^{k}$ we have
\[
\nabla\nabla u=\overline{\nabla}\overline{\nabla}u+\left\langle \overline{\nabla}u,A\left(\cdot,\cdot\right)\right\rangle .
\]
Here we consider the Riemannian connection $\nabla$ of $M$, the Riemannian connection $\overline{\nabla}$ of $\mathbb{R}^{p}\times S_{\sqrt{2\left(k-1\right)}}^{k}$ and the second fundamental form $A$ of $M$ with respect to  $\mathbb{R}^{p}\times S_{\sqrt{2\left(k-1\right)}}^{k}$.
Thus 
\begin{align}
\Delta_{f}u & =\tr_{M}\overline{\nabla}\overline{\nabla}u+\left\langle \overline{\nabla}u,\overrightarrow{H}\right\rangle -\left\langle \nabla f,\nabla u\right\rangle \nonumber \\
 & =\tr_{M}\overline{\nabla}\overline{\nabla}u+\left\langle \overline{\nabla}u,\overrightarrow{H}+\left(\overline{\nabla}f\right)^{\perp}\right\rangle -\left\langle \overline{\nabla}f,\overline{\nabla}u\right\rangle .\label{eq:fsubman}
\end{align}
The function $f\left(x,y\right)=\frac{\left|x\right|^{2}}{4}$
satisfies 
\[
\overline{\nabla}f=\frac{1}{2}x.
\]
By direct calculation for any function $u$ on $\mathbb{R}^{p}\times\mathbb{R}^{k+1}$ we have
\[
\overline{\nabla}\overline{\nabla}u=\widetilde{\nabla}\widetilde{\nabla}u+\left\langle \widetilde{\nabla}u,\overline{A}\left(\cdot,\cdot\right)\right\rangle .
\]
Here we consider the Riemannian connection $\widetilde{\nabla}$ of $\mathbb{R}^{p}\times\mathbb{R}^{k+1}$ and the second fundamental form $\overline{A}$ of $\mathbb{R}^{p}\times S_{\sqrt{2\left(k-1\right)}}^{k}$ with respect to $\mathbb{R}^{p}\times\mathbb{R}^{k+1}$, which is given by
\[
\overline{A}=-\frac{1}{2\left(k-1\right)}g_{S_{\sqrt{2\left(k-1\right)}}^{k}}\left(\cdot,\cdot\right)y.
\]

Take the canonical basis $e_{1},\dots,e_{p+k+1}$ of $\mathbb{R}^{p}\times\mathbb{R}^{k+1}$. Since $\widetilde{\nabla}y_{i}=e_{p+i}$ and $\widetilde{\nabla}\widetilde{\nabla}y_{i}=0$, by the two identities above we have
\begin{equation}
\overline{\nabla}\overline{\nabla}y_{i}=-\frac{1}{2\left(k-1\right)}y_{i}g_{S_{\sqrt{2\left(k-1\right)}}^{k}}.\label{eq:coordinatesphere}
\end{equation}

First assume that $M$ is a self-shrinker. Since $\overline{\nabla}x_{i}=e_{i}$, $\overline{\nabla}\overline{\nabla}x_{i}=0$ and $\left\langle \overline{\nabla}f,\overline{\nabla}x_{i}\right\rangle =\frac{1}{2}x_{i}$, by Equation (\ref{eq:fsubman}) we have
\[
\Delta_{f}x_{i}=-\frac{1}{2}x_{i}.
\]
Since $\overline{\nabla}y_{i}=\proj_{TS_{\sqrt{2\left(k-1\right)}}^{k}}e_{p+i}$
and $\left\langle \overline{\nabla}f,\overline{\nabla}y_{i}\right\rangle =0$, by Equation (\ref{eq:fsubman}) and Equation (\ref{eq:coordinatesphere}) we have
\[
\Delta_{f}y_{i}=-\left(\frac{1}{2\left(k-1\right)}\tr_{M}g_{S_{\sqrt{2\left(k-1\right)}}^{k}}\right)y_{i}.
\]

Now assume that $x_{i}$ and $y_{i}$ are eigenfunctions of the drifted
Laplacian of $M$. Since $\Delta_{f}x_{i}=-\frac{1}{2}x_{i}$, taking
$u=x_{i}$ in Equation (\ref{eq:fsubman}) we obtain
\[
\left\langle \overrightarrow{H}+\left(\overline{\nabla}f\right)^{\perp},e_{i}\right\rangle =0.
\]
Since this identity holds for $i=1,\dots,p$ it follows that 
\[
\proj_{\mathbb{R}^{p}}\left(\overrightarrow{H}+\left(\overline{\nabla}f\right)^{\perp}\right)=0.
\]
Since 
\[
\Delta_{f}y_{i}=-\left(\frac{1}{2\left(k-1\right)}\tr_{M}g_{S_{\sqrt{2\left(k-1\right)}}^{k}}\right)y_{i},
\]
taking $u=y_{i}$ in Equation (\ref{eq:fsubman}) and using Equation
(\ref{eq:coordinatesphere}) we obtain
\[
\left\langle \overrightarrow{H}+\left(\overline{\nabla}f\right)^{\perp},\proj_{TS_{\sqrt{2\left(k-1\right)}}^{k}}e_{p+i}\right\rangle =0.
\]
Since this identity holds for $i=1,\dots,k+1$ it follows that 
\[
\proj_{TS_{\sqrt{2\left(k-1\right)}}^{k}}\left(\overrightarrow{H}+\left(\overline{\nabla}f\right)^{\perp}\right)=0.
\]
Therefore 
\[
\overrightarrow{H}+\left(\overline{\nabla}f\right)^{\perp}=0.
\]
\end{proof}
\begin{cor}
A submanifold $M$ of $\mathbb{R}^{p}$ is a self-shrinker if and
only if the cartesian coordinates $x_{1},\dots,x_{p}$ of $\mathbb{R}^{p}$
are eigenfunctions of the drifted Laplacian of $M$ with eigenvalue
$\frac{1}{2}$.\end{cor}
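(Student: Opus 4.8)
This is precisely the $S^{k}$-free specialization of Lemma \ref{lem:characterization} (informally, the ``$k=0$'' case, in which there are no $y$-coordinates and no spherical factor), so the plan is simply to rerun the relevant part of that argument in the simpler setting $\overline{M}=\mathbb{R}^{p}$, $f(x)=\tfrac{|x|^{2}}{4}$. First I would record the submanifold formula from the proof of Lemma \ref{lem:characterization}: for any function $u$ on $\mathbb{R}^{p}$,
\[
\Delta_{f}u=\tr_{M}\overline{\nabla}\overline{\nabla}u+\bigl\langle\overline{\nabla}u,\overrightarrow{H}+(\overline{\nabla}f)^{\perp}\bigr\rangle-\bigl\langle\overline{\nabla}f,\overline{\nabla}u\bigr\rangle,
\]
where $\overline{\nabla}\overline{\nabla}$ is the Euclidean Hessian; the derivation is verbatim the one given for Equation (\ref{eq:fsubman}), using $\nabla\nabla u=\overline{\nabla}\overline{\nabla}u+\langle\overline{\nabla}u,A(\cdot,\cdot)\rangle$ and $\tr_{M}A=\overrightarrow{H}$. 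Since $f(x)=\tfrac{|x|^{2}}{4}$ one has $\overline{\nabla}f=\tfrac12 x$.

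Next I would substitute $u=x_{i}$. Because $x_{i}$ is a cartesian coordinate, $\overline{\nabla}x_{i}=e_{i}$ is parallel in $\mathbb{R}^{p}$, so $\overline{\nabla}\overline{\nabla}x_{i}=0$ and $\langle\overline{\nabla}f,\overline{\nabla}x_{i}\rangle=\tfrac12\langle x,e_{i}\rangle=\tfrac12 x_{i}$. The displayed formula then collapses to
\[
\Delta_{f}x_{i}=\Bigl\langle e_{i},\overrightarrow{H}+\tfrac12 x^{\perp}\Bigr\rangle-\tfrac12 x_{i},
\]
using that $(\overline{\nabla}f)^{\perp}=\tfrac12 x^{\perp}$ and that pairing $e_i$ against the normal vector $\overrightarrow H+\tfrac12 x^\perp$ is the same as pairing its normal part.

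From this identity both directions are immediate. If $M$ is a self-shrinker then $\overrightarrow{H}+\tfrac12 x^{\perp}=0$, so $\Delta_{f}x_{i}=-\tfrac12 x_{i}$ for every $i$. Conversely, if $\Delta_{f}x_{i}=-\tfrac12 x_{i}$ for $i=1,\dots,p$, the identity forces $\langle e_{i},\overrightarrow{H}+\tfrac12 x^{\perp}\rangle=0$ for all $i$; since $e_{1},\dots,e_{p}$ span $\mathbb{R}^{p}$ and $\overrightarrow{H}+\tfrac12 x^{\perp}$ is a vector in $\mathbb{R}^{p}$, it must vanish, i.e.\ $M$ is a self-shrinker. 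There is no real obstacle here: the only mild point of care is that one should either cite the derivation of (\ref{eq:fsubman}) or reproduce it, since the statement is phrased independently of Lemma \ref{lem:characterization}; everything else is a one-line computation.
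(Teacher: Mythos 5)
Your proof is correct and follows essentially the same route as the paper: the corollary is the $\mathbb{R}^{p}$-coordinate part of the proof of Lemma \ref{lem:characterization}, namely applying the identity (\ref{eq:fsubman}) to $u=x_{i}$ with $\overline{\nabla}\overline{\nabla}x_{i}=0$, $\overline{\nabla}f=\tfrac12 x$, and then using that $e_{1},\dots,e_{p}$ span the ambient $\mathbb{R}^{p}$ to conclude $\overrightarrow{H}+\tfrac12 x^{\perp}=0$ in the converse direction. Your remark that one should rerun (or cite) the derivation of (\ref{eq:fsubman}) in the flat setting, rather than invoke the cylinder lemma verbatim (whose statement involves the factor $\tfrac{1}{2(k-1)}$), is a reasonable point of care, but the substance of the argument is the same as the paper's.
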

\begin{lem}
\label{lem:equations}For a self-shrinker submanifold $M$ properly
immersed in the gradient shrinking Ricci soliton $\mathbb{R}^{p}\times S_{\sqrt{2\left(k-1\right)}}^{k}$
the following assertions hold:

(a) $\Delta_{f}\left\langle x,v\right\rangle =-\frac{1}{2}\left\langle x,v\right\rangle $
for any $v$ in $\mathbb{R}^{p}\times\mathbb{R}^{k+1}$;

(b) $\Delta_{f}\left|x\right|^{2}=-\left|x\right|^{2}+2p-2\sum_{\alpha}\left|\proj_{\mathbb{R}^{p}}\nu_{\alpha}\right|^{2}$;

(c) $\Delta_{f}\left(\sum_{i=1}^{j}x_{i}^{2}\right)\geq-\sum_{i=1}^{j}x_{i}^{2}+2\left(j-\dim TM^{\perp}\right)$;

(d) $\Delta_{f}\left(\sum_{i=1}^{j}x_{i}^{2}\right)\leq-\sum_{i=1}^{j}x_{i}^{2}+2j$.

Here $x$ is the position vector of $\mathbb{R}^{p}$, $x_{1},\dots,x_{p}$
are the cartesian coordinates of $\mathbb{R}^{p}$ and $\nu_{\alpha}$
is an orthonormal frame of the normal bundle of $M$. Moreover, the
functions $\left\langle x,v\right\rangle $, $\left|x\right|^{2}$
and $\sum_{i=1}^{j}x_{i}^{2}$ and their gradients are in $L_{f}^{2}\left(M\right)$.\end{lem}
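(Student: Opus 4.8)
The plan is to deduce all four identities from the single fact $\Delta_{f}x_{i}=-\tfrac12 x_{i}$ established in Lemma~\ref{lem:characterization}, together with the product rule for the drifted Laplacian, $\Delta_{f}(u^{2})=2u\,\Delta_{f}u+2|\nabla u|^{2}$. Part (a) is then immediate: writing $v=(v',v'')\in\mathbb{R}^{p}\times\mathbb{R}^{k+1}$, one has $\left\langle x,v\right\rangle=\sum_{i=1}^{p}v_{i}'x_{i}$, so by linearity $\Delta_{f}\left\langle x,v\right\rangle=\sum_{i}v_{i}'\Delta_{f}x_{i}=-\tfrac12\left\langle x,v\right\rangle$.

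For parts (b)--(d) the point is to evaluate $|\nabla x_{i}|^{2}$ along $M$. Since $x_{i}$ is pulled back from the $\mathbb{R}^{p}$ factor, its gradient in $\mathbb{R}^{p}\times S_{\sqrt{2(k-1)}}^{k}$ is the unit vector $e_{i}\in\mathbb{R}^{p}\subset\mathbb{R}^{p}\times\mathbb{R}^{k+1}$, which is already tangent to the product, so $\nabla x_{i}=\proj_{TM}e_{i}$ and $|\nabla x_{i}|^{2}=1-|\proj_{TM^{\perp}}e_{i}|^{2}=1-\sum_{\alpha}\left\langle e_{i},\nu_{\alpha}\right\rangle^{2}$. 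The product rule gives $\Delta_{f}x_{i}^{2}=2x_{i}\Delta_{f}x_{i}+2|\nabla x_{i}|^{2}=-x_{i}^{2}+2-2\sum_{\alpha}\left\langle e_{i},\nu_{\alpha}\right\rangle^{2}$, and summing over $i=1,\dots,j$ yields
\[
\Delta_{f}\Big(\sum_{i=1}^{j}x_{i}^{2}\Big)=-\sum_{i=1}^{j}x_{i}^{2}+2j-2\sum_{\alpha}\big|\proj_{W_{j}}\nu_{\alpha}\big|^{2},
\]
where $W_{j}=\mathrm{span}\{e_{1},\dots,e_{j}\}$. For $j=p$ one has $\proj_{W_{p}}=\proj_{\mathbb{R}^{p}}$, which is exactly (b). In general $0\le\sum_{\alpha}|\proj_{W_{j}}\nu_{\alpha}|^{2}\le\sum_{\alpha}|\nu_{\alpha}|^{2}=\dim TM^{\perp}$; the upper bound gives (d) and the lower bound gives (c).

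It remains to establish the $L_{f}^{2}$ statements. By Lemma~\ref{lem:volume} the proper immersion hypothesis forces polynomial volume growth, and since the spherical factor is compact this gives $\mathrm{vol}(M\cap\{|x|\le R\})\le C(1+R)^{d}$ for $R\ge1$. Decomposing $M$ into the shells $\{j-1\le|x|<j\}$, on which any polynomial in $|x|$ is bounded while $e^{-f}=e^{-|x|^{2}/4}\le e^{-(j-1)^{2}/4}$, one bounds the weighted $L^{2}$ norm of such a polynomial by a convergent series $\sum_{j\ge1}C_{m}j^{2m}e^{-(j-1)^{2}/4}(1+j)^{d}<\infty$; hence $\left\langle x,v\right\rangle$, $|x|^{2}$ and $\sum_{i=1}^{j}x_{i}^{2}$ all lie in $L_{f}^{2}(M)$. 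For the gradients, $|\nabla\left\langle x,v\right\rangle|\le|v|$ is bounded, while $|\nabla|x|^{2}|\le2|x|$ and $|\nabla\sum_{i=1}^{j}x_{i}^{2}|\le2|x|$, so these are controlled by $\mathrm{vol}_{f}(M)<\infty$ and the polynomial estimate just obtained.

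I do not expect a genuine obstacle here: computations (a)--(d) are a direct application of Lemma~\ref{lem:characterization} and the standard relation between intrinsic and ambient gradients. The only step carrying real content is the integrability, which rests on the equivalence of proper immersion with polynomial volume growth (Lemma~\ref{lem:volume}, i.e.\ Corollary~1 of \cite{CMZ3}); the small technical care needed there is in passing from extrinsic balls in $\mathbb{R}^{p}\times S_{\sqrt{2(k-1)}}^{k}$ to the sublevel sets $\{|x|\le R\}$, for which compactness of the sphere factor suffices.
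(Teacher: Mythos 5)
Your proposal is correct and follows essentially the same route as the paper: item (a) by linearity from Lemma \ref{lem:characterization}, items (b)--(d) from the single identity $\Delta_{f}\bigl(\sum_{i=1}^{j}x_{i}^{2}\bigr)=-\sum_{i=1}^{j}x_{i}^{2}+2j-2\sum_{\alpha}\bigl|\proj_{\mathbb{R}^{j}}\nu_{\alpha}\bigr|^{2}$ obtained from $\Delta_{f}x_{i}=-\tfrac12 x_{i}$ and $|\nabla x_{i}|^{2}=1-\sum_{\alpha}\langle e_{i},\nu_{\alpha}\rangle^{2}$, and the $L_{f}^{2}$ statements by the same shell decomposition in $|x|$ using polynomial volume growth and finite weighted volume from Lemma \ref{lem:volume}. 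No gaps to report.
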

\begin{proof}
First we prove Items (a), (b), (c) and (d). Item (a) follows from
Lemma \ref{lem:characterization}. We have
\begin{align*}
\Delta_{f}\left(\sum_{i=1}^{j}x_{i}^{2}\right) & =2\sum_{i=1}^{j}\Delta_{f}x_{i}\cdot x_{i}+2\sum_{i=1}^{j}\left|\nabla x_{i}\right|^{2}\\
 & =-\sum_{i=1}^{j}x_{i}^{2}+2\sum_{i=1}^{j}\left|\overline{\nabla}x_{i}\right|^{2}-2\sum_{i=1}^{j}\left|\left(\overline{\nabla}x_{i}\right)^{\perp}\right|^{2}.
\end{align*}
Here we consider the Riemannian connections $\nabla$ and $\overline{\nabla}$ of $M$ and $\mathbb{R}^{p}\times S_{\sqrt{2\left(k-1\right)}}^{k}$ respectively. On the other hand
\begin{align*}
\sum_{i=1}^{j}\left|\left(\overline{\nabla}x_{i}\right)^{\perp}\right|^{2} & =\sum_{i=1}^{j}\sum_{\alpha}\left\langle e_{i},\nu_{\alpha}\right\rangle ^{2}\\
 & =\sum_{\alpha}\left|\proj_{\mathbb{R}^{j}}\nu_{\alpha}\right|^{2},
\end{align*}
where $e_{1},\dots,e_{j}$ is the canonical basis of $\mathbb{R}^{j}$.
Thus 
\[
\Delta_{f}\left(\sum_{i=1}^{j}x_{i}^{2}\right)=-\sum_{i=1}^{j}x_{i}^{2}+2j-2\sum_{\alpha}\left|\proj_{\mathbb{R}^{j}}\nu_{\alpha}\right|^{2}.
\]
This formula proves Items (b), (c) and (d).

To complete the proof it suffices to prove that the functions $\left\langle x,v\right\rangle $
and $\sum_{i=1}^{j}x_{i}^{2}$ and their gradients are in $L_{f}^{2}\left(M\right)$.
Consider the open ball $B^{p}\left(o,r\right)$ in $\mathbb{R}^{p}$
of radius $r$ and center at the origin and the intrinsic open ball
$B_{r}^{M}$ in $M$ of radius $r$ and center at a fixed point of
$M$. Recall that $f\left(x,y\right)=\frac{\left|x\right|^{2}}{4}$.
We have
\begin{align*}
\int_{M}f^{2}e^{-f} & =\sum_{r=0}^{\infty}\int_{M\cap\left(\left(B^{p}\left(o,r+1\right)\setminus B^{p}\left(o,r\right)\right)\times S_{\sqrt{2\left(k-1\right)}}^{k}\right)}f^{2}e^{-f}\\
 & \leq C_{1}\sum_{r=0}^{\infty}\left(r+1\right)^{4}e^{-\frac{r^{2}}{4}}vol\left(M\cap\left(B^{p}\left(o,r+1\right)\times S_{\sqrt{2\left(k-1\right)}}^{k}\right)\right)\\
 & \leq C_{2}\sum_{r=0}^{\infty}\left(r+1\right)^{4}e^{-\frac{r^{2}}{4}}vol\left(B_{r+1}^{M}\right)\\
 & \leq C_{3}\sum_{r=0}^{\infty}\left(r+1\right)^{4}e^{-\frac{r^{2}}{4}}\left(r+1\right)^{\alpha}\\
 & <\infty,
\end{align*}
where $\alpha>0$. In the third inequality above we used
the fact that $M$ has polynomial volume growth (by Lemma \ref{lem:volume}).
Thus 
\[
\int_{M}\left|x\right|^{4}e^{-f}<\infty.
\]
Since $M$ has finite weighted volume (by Lemma \ref{lem:volume})
it follows that
\begin{align*}
\int_{M}\left(\sum_{i=1}^{j}x_{i}^{2}\right)e^{-f} & \leq\left(\int_{M}\left|x\right|^{4}e^{-f}\right)^{\frac{1}{2}}\left(\int_{M}e^{-f}\right)^{\frac{1}{2}}\\
 & <\infty.
\end{align*}
Clearly
\begin{align*}
\int_{M}\left\langle x,v\right\rangle ^{2}e^{-f} & \leq\left|v\right|^{2}\int_{M}\left|x\right|^{2}e^{-f}\\
 & <\infty,
\end{align*}
\begin{align*}
\int_{M}\left|\nabla\left\langle x,v\right\rangle \right|^{2}e^{-f} & \leq C_{4}\left|v\right|^{2}\int_{M}e^{-f}\\
 & <\infty,
\end{align*}
and
\begin{align*}
\int_{M}\left|\nabla\sum_{i=1}^{j}x_{i}^{2}\right|^{2}e^{-f} & \leq C_{5}\sum_{i=1}^{j}\int_{M}\left|x_{i}\right|^{2}\left|\nabla x_{i}\right|^{2}e^{-f}\\
 & \leq C_{5}\sum_{i=1}^{j}\int_{M}\left|x_{i}\right|^{2}e^{-f}\\
 & <\infty.
\end{align*}
\end{proof}
\begin{lem}
\label{lem:discrete} For a self-shrinker submanifold $M$ properly
immersed in a gradient shrinking Ricci soliton $\left(\overline{M},g,f\right)$
with $f$ convex, the drifted Laplacian of $M$ with
domain $L_{f}^{2}\left(M\right)$ has discrete spectrum.\end{lem}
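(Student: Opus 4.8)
The plan is to conjugate the drifted Laplacian to a genuine Schr\"odinger operator and to reduce the statement to a growth property of the associated potential. Since $M$ is properly immersed in the complete manifold $\overline{M}$, it is itself complete, so $\Delta_{f}$ is essentially self-adjoint on $C_{c}^{\infty}(M)$ and the self-adjoint operator referred to in the statement is unambiguous. The isometry $U\colon L_{f}^{2}(M)\to L^{2}(M,dvol)$ given by $Uu=e^{-f/2}u$ maps $C_{c}^{\infty}(M)$ onto itself and conjugates $-\Delta_{f}$ to the Schr\"odinger operator $-\Delta+V$ on $L^{2}(M,dvol)$, where
\[
V=\tfrac14\left|\nabla f\right|^{2}-\tfrac12\Delta f
\]
and $\nabla,\Delta$ are the intrinsic gradient and Laplacian of $M$. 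Spectral discreteness is a unitary invariant, so it is enough to show that $-\Delta+V$ has discrete spectrum; by the classical criterion for Schr\"odinger operators on complete manifolds this holds as soon as $V$ is bounded below and $V(x)\to+\infty$ as $x$ leaves every compact subset of $M$. We may assume $\overline{M}$, and hence $M$, is noncompact, since a compact shrinking soliton is Einstein with $f$ constant, so that $M$ is a closed minimal submanifold and the claim is classical.

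The key step is to bound $V$ from below by the restriction to $M$ of the corresponding ambient potential. Putting $u=f$ in Equation (\ref{eq:fsubman}) and using the self-shrinker equation $\overrightarrow{H}+(\overline{\nabla}f)^{\perp}=0$ gives $\Delta_{f}f=\tr_{M}\overline{\nabla}\,\overline{\nabla}f-\left|\overline{\nabla}f\right|^{2}$; substituting this together with $\Delta f=\Delta_{f}f+\left|\nabla f\right|^{2}$ into the formula for $V$ and regrouping yields
\[
V=\Big(\tfrac14\big|\overline{\nabla}f\big|^{2}-\tfrac12\overline{\Delta}f\Big)\Big|_{M}+\tfrac14\big(\big|\overline{\nabla}f\big|^{2}-\left|\nabla f\right|^{2}\big)+\tfrac12\big(\overline{\Delta}f-\tr_{M}\overline{\nabla}\,\overline{\nabla}f\big).
\]
The middle term equals $\tfrac14|(\overline{\nabla}f)^{\perp}|^{2}\ge 0$, and the last term is nonnegative because $f$ is convex, so that the full trace of the positive semidefinite tensor $\overline{\nabla}\,\overline{\nabla}f$ dominates its partial trace over $TM$. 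Hence $V\ge\big(\tfrac14|\overline{\nabla}f|^{2}-\tfrac12\overline{\Delta}f\big)\big|_{M}$, i.e.\ $V$ dominates the restriction to $M$ of the ambient Schr\"odinger potential $\overline{V}:=\tfrac14|\overline{\nabla}f|^{2}-\tfrac12\overline{\Delta}f$.

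It remains to see that $\overline{V}$ tends to $+\infty$ on $\overline{M}$; combined with properness of the immersion this forces $V\to+\infty$ outside compacts of $M$. On a complete gradient shrinking Ricci soliton $\overline{\ric}_{f}=cg$ with $c>0$, the trace of the soliton equation gives $\overline{R}+\overline{\Delta}f=c\dim\overline{M}$, and the classical soliton identity gives $\overline{R}+|\overline{\nabla}f|^{2}=2cf+\mathrm{const}$; subtracting, $\overline{\Delta}f=|\overline{\nabla}f|^{2}-2cf+\mathrm{const}$, and therefore
\[
\overline{V}=-\tfrac14\big|\overline{\nabla}f\big|^{2}+cf-\mathrm{const}\ \ge\ \tfrac{c}{2}\,f-\mathrm{const},
\]
the inequality using $|\overline{\nabla}f|^{2}=2cf-\overline{R}+\mathrm{const}\le 2cf+\mathrm{const}$ by the theorem that $\overline{R}\ge 0$ on complete shrinking solitons. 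Since $f$ is a proper function on $\overline{M}$ (for the cylinder soliton this is elementary, $f=\tfrac14|x|^{2}$, and in general it is a theorem of Cao and Zhou), properness of the immersion makes $f|_{M}$ proper on $M$, whence $V\ge\overline{V}|_{M}\ge\tfrac{c}{2}f|_{M}-\mathrm{const}\to+\infty$; in particular $V$ is bounded below, and the proof is complete.

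The main obstacle is the sign of $V=\tfrac14|\nabla f|^{2}-\tfrac12\Delta f$: both summands grow like $f$ at infinity of $M$, so estimating them separately --- equivalently, integrating $\Delta_{f}f$ by parts against a cutoff directly in $L_{f}^{2}(M)$ --- does not close. The way around this is to pass to the Schr\"odinger picture and to observe that the self-shrinker equation turns the cross term $\langle\overline{\nabla}f,\overrightarrow{H}\rangle$ into $-|(\overline{\nabla}f)^{\perp}|^{2}$, which is exactly the term needed to reassemble $V$ as the restriction of the ambient potential plus manifestly nonnegative corrections. Beyond the standard Schr\"odinger criterion, the only analytic inputs that remain are properness of the potential $f$ on the soliton and the nonnegativity of its scalar curvature.
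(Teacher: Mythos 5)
Your proof is correct and takes essentially the same route as the paper: conjugate $\Delta_{f}$ by $e^{f/2}$ to a Schr\"odinger operator with potential $\tfrac14\left|\nabla f\right|^{2}-\tfrac12\Delta f$, bound that potential below by $\tfrac{c}{2}f-\mathrm{const}$ using the self-shrinker equation, convexity of $f$ (to discard the normal trace of $\overline{\nabla}\,\overline{\nabla}f$), Chen's $\overline{R}\geq0$ and Hamilton's identity, and conclude from the properness of $f$ on $\overline{M}$ (Cao--Zhou) together with the properness of the immersion. The only blemish is your side remark that a compact shrinking soliton is Einstein with $f$ constant, which is false in general (e.g.\ the Koiso solitons); it is harmless here since the convexity hypothesis already forces $f$ to be constant when $\overline{M}$ is compact, and the reduction to the noncompact case is not needed anyway.
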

\begin{proof}
Consider the Riemannian connections $\nabla$ and $\overline{\nabla}$ of $M$ and $\overline{M}$ respectively. We can assume that the Bakry-\'Emery-Ricci curvature of $\overline{M}$ satisfies $\overline{\ric_{f}}=\frac{1}{2}g$.
Consider the unitary isomorphism $U:L^{2}\left(M\right)\to L_{f}^{2}\left(M\right)$
given by $Uu=ue^{\frac{f}{2}}$ and the operator $L=\Delta+\frac{1}{2}\Delta f-\frac{1}{4}\left|\nabla f\right|^{2}$.
By direct calculation we have
\[
\Delta_{f}=ULU^{-1}.
\]
Thus the operator $\Delta_{f}$ with domain $L_{f}^{2}\left(M\right)$
has discrete spectrum if and only if the operator $L$ with domain
$L^{2}\left(M\right)$ has discrete spectrum. By spectral theory to
prove that $L$ has discrete spectrum it suffices to show that the
function $\frac{1}{4}\left|\nabla f\right|^{2}-\frac{1}{2}\Delta f$
is proper (e.g. \cite{RS} page 120). Since 
\[
\nabla\nabla f=\overline{\nabla}\overline{\nabla}f+\left\langle \overline{\nabla}f,A\left(\cdot,\cdot\right)\right\rangle ,
\]
it follows that 
\begin{align*}
\frac{1}{4}\left|\nabla f\right|^{2}-\frac{1}{2}\Delta f & =\frac{1}{4}\left|\overline{\nabla}f\right|^{2}-\frac{1}{4}\left|\left(\overline{\nabla}f\right)^{\perp}\right|^{2}-\frac{1}{2}\tr_{M}\overline{\nabla}\overline{\nabla}f-\frac{1}{2}\left\langle \overline{\nabla}f,\overrightarrow{H}\right\rangle \\
 & =\frac{1}{4}\left|\overline{\nabla}f\right|^{2}+\frac{1}{4}\left|\overrightarrow{H}\right|^{2}-\frac{1}{2}\tr_{M}\overline{\nabla}\overline{\nabla}f.
\end{align*}
This identity holds for any $f$-minimal submanifold of a smooth metric
measure space. By \cite{C} the scalar curvature of $\overline{M}$
satisfies $\overline{R}\geq0$. By \cite{Ham} $\left|\overline{\nabla}f\right|^{2}=f-\overline{R}$.
Since $\overline{M}$ is gradient shrinking Ricci soliton it follows
that $\overline{\Delta}f=\frac{\dim\overline{M}}{2}-\overline{R}$.
Take a local orthonormal frame $\left\{ \nu_{\alpha}\right\} $ in
the normal bundle of $M$. We have
\begin{align*}
\frac{1}{4}\left|\nabla f\right|^{2}-\frac{1}{2}\Delta f & =\frac{1}{4}\left(f-\overline{R}\right)+\frac{1}{4}\left|\overrightarrow{H}\right|^{2}-\frac{1}{2}\overline{\Delta}f+\frac{1}{2}\sum_{\alpha}\overline{\nabla}\overline{\nabla}f\left(\nu_{\alpha},\nu_{\alpha}\right)\\
 & =\frac{1}{4}\left|\overrightarrow{H}\right|^{2}+\frac{f}{4}+\frac{\overline{R}}{4}-\frac{\dim\overline{M}}{4}+\frac{1}{2}\sum_{\alpha}\overline{\nabla}\overline{\nabla}f\left(\nu_{\alpha},\nu_{\alpha}\right)\\
 & \geq\frac{f}{4}-\frac{\dim\overline{M}}{4}.
\end{align*}
Since $M$ is properly immersed in $\overline{M}$ and $f$ is a proper function on $\overline M$ (see \cite{MR2732975}) it follows that
the restriction of $f$ to $M$ is proper. This proves the result.
\end{proof}
By the divergence theorem, for any smooth function $u$ on a compact smooth metric measure space $\left(M,g,f\right)$ we have $\int_{M}\Delta_{f}u\cdot e^{-f}=0$.
But this is not generally true if $M$ is noncompact. It is an interesting problem to see when this formula holds because it has many geometric applications. We use the following lemma in the proofs of some results of the next section.
\begin{lem}
\label{lem:integralzero}Consider a complete smooth metric measure
space $\left(M,g,f\right)$ with finite weighted volume, i.e. $\int_{M}e^{-f}<\infty$.
Given a function $u$ on $M$ assume that $\nabla u$ is in $L_{f}^{2}\left(M\right)$.
If the drifted Laplacian $\Delta_{f}u$ does not change sign or it
is in $L_{f}^{1}\left(M\right)$, then 
\[
\int_{M}\Delta_{f}u\cdot e^{-f}=0.
\]
\end{lem}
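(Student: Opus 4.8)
\emph{Proof sketch.} The plan is to exhaust $M$ by relatively compact domains, apply the weighted divergence theorem on each, and let the domains grow. First I would fix a point $o\in M$, write $\rho$ for the distance to $o$, and --- using that $M$ is complete, so the metric balls $B_R=B_R^M$ are relatively compact and exhaust $M$ --- choose smooth cutoffs $\varphi_j\colon M\to[0,1]$ with $\varphi_j\equiv 1$ on $B_j$, $\operatorname{supp}\varphi_j\subset B_{j+1}$, $\varphi_j\le\varphi_{j+1}$, and $\left|\nabla\varphi_j\right|\le C$ for a constant $C$ independent of $j$ (for instance by mollifying $\min\{1,\max\{0,j+1-\rho\}\}$). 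Since $\operatorname{div}\!\left(e^{-f}\nabla u\right)=e^{-f}\Delta_f u$, applying the divergence theorem to the compactly supported vector field $\varphi_j e^{-f}\nabla u$ gives the basic identity
\[
\int_M\varphi_j\,\Delta_f u\,e^{-f}=-\int_M\left\langle\nabla\varphi_j,\nabla u\right\rangle e^{-f}.
\]

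Next I would estimate the right-hand side. Since $\nabla\varphi_j$ is supported in $B_{j+1}\setminus B_j$, Cauchy--Schwarz gives
\[
\left|\int_M\left\langle\nabla\varphi_j,\nabla u\right\rangle e^{-f}\right|
\le C\left(\int_{M\setminus B_j}e^{-f}\right)^{1/2}\left(\int_M\left|\nabla u\right|^2e^{-f}\right)^{1/2}.
\]
The second factor is finite by the hypothesis $\nabla u\in L_f^2(M)$, and the first factor tends to $0$ as $j\to\infty$ because $\int_M e^{-f}<\infty$. Hence $\int_M\varphi_j\,\Delta_f u\,e^{-f}\to 0$. I want to emphasize that it is the \emph{finiteness of the weighted volume}, rather than any decay of $\left|\nabla\varphi_j\right|$, that makes this boundary term vanish once it is paired via Cauchy--Schwarz with $\nabla u\in L_f^2(M)$; this is the one genuinely essential point, and everything after it is convergence-theorem bookkeeping.

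It then remains to pass to the limit on the left. If $\Delta_f u\in L_f^1(M)$, then $\varphi_j\Delta_f u\to\Delta_f u$ pointwise with $\left|\varphi_j\Delta_f u\right|e^{-f}\le\left|\Delta_f u\right|e^{-f}\in L^1$, so dominated convergence gives $\int_M\varphi_j\Delta_f u\,e^{-f}\to\int_M\Delta_f u\,e^{-f}$, which therefore equals $0$. If instead $\Delta_f u$ does not change sign, I may assume $\Delta_f u\ge 0$ (replacing $u$ by $-u$ otherwise); then the integrands $\varphi_j\Delta_f u\,e^{-f}$ are nonnegative and nondecreasing in $j$ with pointwise limit $\Delta_f u\,e^{-f}$, so monotone convergence gives $\int_M\varphi_j\Delta_f u\,e^{-f}\to\int_M\Delta_f u\,e^{-f}\in[0,\infty]$; comparing with the limit $0$ computed above forces $\int_M\Delta_f u\,e^{-f}=0$ (and in particular $\Delta_f u\in L_f^1(M)$).

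The only technical care needed is in the first paragraph: completeness is used both to produce a relatively compact exhaustion of $M$ and to build the uniformly Lipschitz cutoffs $\varphi_j$, and one should take $u$ of class $C^2$ so that the pointwise identity $\operatorname{div}\!\left(e^{-f}\nabla u\right)=e^{-f}\Delta_f u$ and the divergence theorem apply verbatim, as is the case in all the applications.
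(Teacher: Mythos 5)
Your proof is correct and follows essentially the same route as the paper: uniformly Lipschitz cutoffs on an exhaustion by metric balls, integration by parts against $\varphi_j e^{-f}\nabla u$, Cauchy--Schwarz with $\nabla u\in L_f^2(M)$ and the finite weighted volume to kill the boundary term, and then monotone or dominated convergence on the left-hand side. Your extra care about the monotonicity $\varphi_j\le\varphi_{j+1}$ for the monotone convergence step is a small refinement of the same argument, not a different approach.
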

\begin{proof}
Consider the open ball $B_{r}$ of radius $r$ and center at a fixed
point of $M$. Take a cutoff function $\phi$ on $M$ such that $\phi=1$
on $B_{r}$, $\phi=0$ on $M\setminus B_{r+1}$, $0\leq\phi\leq1$
and $\left|\nabla\phi\right|\leq C$ on $B_{r+1}\setminus B_{r}$.
Multiplying $\Delta_{f}u$ by $\phi$ and integrating by parts we obtain
\[
\int_{M}\Delta_{f}u\cdot\phi e^{-f}=-\int_{M}\left\langle \nabla u,\nabla\phi\right\rangle e^{-f}.
\]
By the monotone convergence theorem (if $\Delta_{f}u$ does not change
sign) or the dominated convergence theorem (if $\Delta_{f}u$ is in
$L_{f}^{1}\left(M\right)$), sending $r\to\infty$ we obtain
\[
\int_{M}\Delta_{f}u\cdot e^{-f}=-\lim_{r\to\infty}\int_{M}\left\langle \nabla u,\nabla\phi\right\rangle e^{-f}.
\]
Since $M$ has finite weighted volume and $\nabla u$ is in $L_{f}^{2}\left(M\right)$
it follows from Holder's inequality that the limit on the right hand
side is equal to zero.
\end{proof}
The lemma above can be generalized to $f$-parabolic manifolds. The
following result may be of independent interest.
\begin{lem}
Suppose a $f$-parabolic complete smooth metric measure space $\left(M,g,f\right)$. Given a function $u$ on $M$ assume that $\nabla u$
is in $L_{f}^{2}\left(M\right)$. If the drifted Laplacian $\Delta_{f}u$
does not change sign or it is in $L_{f}^{1}\left(M\right)$, then 
\[
\int_{M}\Delta_{f}u\cdot e^{-f}=0.
\]
\end{lem}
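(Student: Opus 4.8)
The plan is to run the argument of Lemma \ref{lem:integralzero} with the cutoff functions replaced by ones adapted to the $f$-parabolicity. Recall that $\left(M,g,f\right)$ being $f$-parabolic is equivalent to the existence of a sequence of compactly supported locally Lipschitz functions $\phi_j$ on $M$ with $0\le\phi_j\le1$, with $\phi_j\to1$ locally uniformly, and with
\[
\int_M\left|\nabla\phi_j\right|^2e^{-f}\longrightarrow0\qquad\text{as }j\to\infty.
\]
When $M$ has finite weighted volume one may take $\phi_j$ to be the explicit cutoffs used in Lemma \ref{lem:integralzero}, which is why that lemma is the special case stated there. I would fix such a sequence once and for all.

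For each $j$, multiplying $\Delta_f u$ by $\phi_j$ and integrating by parts --- legitimate with no boundary contribution since $\phi_j$ has compact support --- gives
\[
\int_M\Delta_f u\cdot\phi_j\,e^{-f}=-\int_M\left\langle\nabla u,\nabla\phi_j\right\rangle e^{-f}.
\]
By the Cauchy-Schwarz inequality the right-hand side is bounded in absolute value by $\left(\int_M\left|\nabla u\right|^2e^{-f}\right)^{1/2}\left(\int_M\left|\nabla\phi_j\right|^2e^{-f}\right)^{1/2}$, which tends to $0$ as $j\to\infty$: the first factor is finite because $\nabla u\in L_f^2(M)$ by hypothesis, and the second tends to $0$ by $f$-parabolicity. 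Hence $\int_M\Delta_f u\cdot\phi_j\,e^{-f}\to0$.

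It then remains to identify the limit of the left-hand side with $\int_M\Delta_f u\cdot e^{-f}$. If $\Delta_f u\in L_f^1(M)$, this follows from the dominated convergence theorem with dominating function $\left|\Delta_f u\right|e^{-f}$, using $\phi_j\to1$ pointwise and $0\le\phi_j\le1$. If instead $\Delta_f u$ does not change sign, say $\Delta_f u\ge0$ (the case $\Delta_f u\le0$ being symmetric), then $\Delta_f u\cdot\phi_j\,e^{-f}\to\Delta_f u\cdot e^{-f}$ pointwise through nonnegative functions, so Fatou's lemma yields
\[
\int_M\Delta_f u\cdot e^{-f}\le\liminf_{j\to\infty}\int_M\Delta_f u\cdot\phi_j\,e^{-f}=0,
\]
and since $\Delta_f u\ge0$ this forces $\int_M\Delta_f u\cdot e^{-f}=0$ (indeed $\Delta_f u\equiv0$). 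I expect the only subtle point to be exactly this last case: unlike in Lemma \ref{lem:integralzero}, the cutoffs provided by parabolicity need not form a monotone sequence, so the monotone convergence theorem is not directly available; invoking Fatou's lemma instead, in the direction prescribed by the sign of $\Delta_f u$, is what makes the argument go through. Everything else is a verbatim adaptation of the previous lemma.
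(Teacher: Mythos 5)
Your proof is correct and follows essentially the same strategy as the paper: cutoffs adapted to $f$-parabolicity with weighted Dirichlet energy tending to zero, integration by parts, and Cauchy--Schwarz against $\nabla u\in L_{f}^{2}(M)$. The one genuine difference is the choice of cutoffs and the convergence theorem in the sign-definite case: the paper takes the $f$-harmonic capacity potentials on annuli $B_i\setminus B_0$, which are monotone by the maximum principle (with $\phi_i\to1$ and vanishing energy coming from $f$-parabolicity via Li's book), so the monotone convergence theorem applies directly; you instead invoke the abstract capacity-zero characterization of $f$-parabolicity, whose cutoffs need not be monotone, and correctly compensate with Fatou's lemma in the direction of the sign of $\Delta_{f}u$. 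Both arguments are sound; yours is marginally more flexible (no monotonicity needed, and it makes transparent why Lemma \ref{lem:integralzero} is the finite-weighted-volume special case), while the paper's construction is self-contained in that it exhibits the cutoff sequence explicitly rather than quoting the equivalence with vanishing weighted capacity.
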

\begin{proof}
Take an exhausting sequence of open balls $\left\{ B_{i}\right\} $
and a sequence of $f$-harmonic functions $\left\{ \phi_{i}\right\} $
on $B_{i}\setminus B_{0}$ with boundary condition $\phi=1$ on $\partial B_{0}$
and $\phi=0$ on $B_{i}$. By the maximum principle we have
\[
\phi_{i}\leq\phi_{i+1},
\]
and by the divergence theorem we have
\[
\int_{B_{i}\setminus B_{0}}\left|\nabla\phi_{i}\right|^{2}e^{-f}=-\int_{\partial B_{0}}\frac{\partial\phi_{i}}{\partial\nu}e^{-f}.
\]
Since $M$ is $f$-parabolic it follows that $\phi_{i}\to1$ pointwise
and 
\[
\lim_{i\to\infty}\int_{B_{i}\setminus B_{0}}\left|\nabla\phi_{i}\right|^{2}e^{-f}=0,
\]
(e.g. \cite{L} chapter 20). Extend $\phi_{i}=1$ in $B_{0}$ and
$\phi_{i}=0$ in $M\setminus B_{i}$. Multiplying $\Delta_{f}u$ by
$\phi_{i}$ and integrating by parts we obtain
\[
\int_{M}\Delta_{f}u\cdot\phi_{i}e^{-f}=-\int_{M}\left\langle \nabla u,\nabla\phi_{i}\right\rangle e^{-f}.
\]
By the monotone convergence theorem (if $\Delta_{f}u$ does not change
sign) or the dominated convergence theorem (if $\Delta_{f}u$ is in
$L_{f}^{1}\left(M\right)$), sending $i\to\infty$ we obtain
\[
\int_{M}\Delta_{f}u\cdot e^{-f}=-\lim_{i\to\infty}\int_{M}\left\langle \nabla u,\nabla\phi_{i}\right\rangle e^{-f}.
\]
The limit on the right hand side is equal to zero since 
\[
\left|\int_{M}\left\langle \nabla u,\nabla\phi_{i}\right\rangle e^{-f}\right|\leq\left(\int_{M}\left|\nabla u\right|^{2}e^{-f}\right)^{\frac{1}{2}}\left(\int_{M}\left|\nabla\phi_{i}\right|^{2}e^{-f}\right)^{\frac{1}{2}}
\]
and $\nabla u$ is in $L_{f}^{2}\left(M\right)$.
\end{proof}

\section{Proofs of Theorem \ref{thm:selfshrinkereuclidean1} and Theorem \ref{thm:selfshrinkercylinder}}

We use the following lemma in the proofs of Theorem \ref{thm:selfshrinkereuclidean1}
and Corollary \ref{cor:selfshrinkereuclidean2}.
\begin{lem}
\label{lem:sphereselfshrinker}Given a vector $v$ in $\mathbb{R}^{k+1}$
and $r>0$ the hypersurface $S^{k}\left(v,r\right)\times\mathbb{R}^{n-k}$ in $\mathbb{R}^{n+1}$ is a self-shrinker if and only if it is the product $S^{k}\left(o,\sqrt{2k}\right)\times\mathbb{R}^{n-k}$.\end{lem}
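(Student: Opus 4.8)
The plan is to write the self-shrinker equation $\overrightarrow{H}+\frac{1}{2}x^{\perp}=0$ explicitly for the product hypersurface $M=S^{k}\left(v,r\right)\times\mathbb{R}^{n-k}$ in $\mathbb{R}^{n+1}=\mathbb{R}^{k+1}\times\mathbb{R}^{n-k}$ and to read off the resulting restrictions on $v$ and $r$. Throughout, $\left(y,z\right)$ denotes a point of $M$, so $y\in\mathbb{R}^{k+1}$ with $\left|y-v\right|=r$ and $z\in\mathbb{R}^{n-k}$.

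First I would record the elementary geometry of $M$. Since the factor $\mathbb{R}^{n-k}$ is flat and totally geodesic in $\mathbb{R}^{n+1}$, the unit normal of $M$ at $\left(y,z\right)$ is $\nu=\left(\frac{y-v}{r},0\right)$, and the second fundamental form receives no contribution from the $\mathbb{R}^{n-k}$ directions, so the mean curvature vector of $M$ equals that of the round sphere $S^{k}\left(v,r\right)\subset\mathbb{R}^{k+1}$. As the shape operator of that sphere with respect to the outward normal is $\frac{1}{r}\operatorname{Id}$, this gives $\overrightarrow{H}=-\frac{k}{r}\nu$. Then I would compute, for the position vector $x=\left(y,z\right)$ of $\mathbb{R}^{n+1}$,
\[
x^{\perp}=\left\langle x,\nu\right\rangle \nu=\frac{\left\langle y,y-v\right\rangle }{r}\,\nu=\frac{r^{2}+\left\langle v,y-v\right\rangle }{r}\,\nu,
\]
using $\left\langle y,y-v\right\rangle =\left|y-v\right|^{2}+\left\langle v,y-v\right\rangle =r^{2}+\left\langle v,y-v\right\rangle$. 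Substituting these two expressions into $\overrightarrow{H}+\frac{1}{2}x^{\perp}=0$ and cancelling $\nu$ reduces the self-shrinker condition to the scalar identity
\[
r^{2}+\left\langle v,y-v\right\rangle =2k\qquad\text{for all }y\in S^{k}\left(v,r\right).
\]

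Finally I would analyse this identity. As $y$ ranges over $S^{k}\left(v,r\right)$ the difference $y-v$ ranges over the entire sphere of radius $r$ in $\mathbb{R}^{k+1}$, so the function $y\mapsto\left\langle v,y-v\right\rangle$ attains both values $\pm\left|v\right|r$; hence it is constant on $S^{k}\left(v,r\right)$ precisely when $v=0$. Therefore the identity forces $v=0$, and then $r^{2}=2k$, i.e. $r=\sqrt{2k}$. Conversely, for $v=0$ and $r=\sqrt{2k}$ the identity holds, so $S^{k}\left(o,\sqrt{2k}\right)\times\mathbb{R}^{n-k}$ is indeed a self-shrinker. The only real content is the two standard computations (of $\overrightarrow{H}$ and of $\left\langle x,\nu\right\rangle$) together with the observation that $\left\langle v,y-v\right\rangle$ is non-constant over the sphere unless $v=0$, so I do not expect a genuine obstacle; an alternative would be to verify via the Corollary above that the cartesian coordinates of $\mathbb{R}^{n+1}$ are $\Delta_{f}$-eigenfunctions on $M$ with eigenvalue $\frac{1}{2}$, but the direct route is shorter.
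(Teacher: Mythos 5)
Your proposal is correct and follows essentially the same route as the paper: compute the unit normal and mean curvature of $S^{k}\left(v,r\right)\times\mathbb{R}^{n-k}$, reduce the self-shrinker equation to a scalar identity on the sphere, and conclude $v=0$, $r=\sqrt{2k}$. The only cosmetic difference is that the paper combines the scalar identity with $\left|y-v\right|^{2}=r^{2}$ to see the sphere must be centered at the origin, whereas you observe directly that $\left\langle v,y-v\right\rangle$ is non-constant unless $v=0$; both are equivalent one-line algebraic finishes.
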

\begin{proof}
Take the canonical basis $e_{1},\dots,e_{k+1}$ and the cartesian
coordinates $x_{1},\dots,x_{k+1}$ of $\mathbb{R}^{k+1}$. The hypersurface has normal vector $\nu=\frac{1}{r}\sum_{i=1}^{k+1}\left(x_{i}-v_{i}\right)e_{i}$
and mean curvature vector $\overrightarrow{H}=-\frac{k}{r}\nu$. Thus it is a self-shrinker if and only if
\[
\sum_{i=1}^{k+1}x_{i}\left(x_{i}-v_{i}\right)=2k.
\]
In particular, $S^{k}\left(o,\sqrt{2k} \right)\times\mathbb{R}^{n-k}$ is a
self-shrinker.

Now assume that the hypersurface is a self-shrinker. Combining the equation above with
\[
\sum_{i=1}^{k+1}\left(x_{i}-v_{i}\right)^{2}=r^{2}
\]
we see that $S^{k}\left(v,r\right)\times\mathbb{R}^{n-k}=S^{k}\left(o,s\right)\times\mathbb{R}^{n-k}$
where $s^{2}=4k-r^{2}+\sum_{i=1}^{k+1}v_{i}^{2}$. Thus $v=0$ and $r=\sqrt{2k}$.
\end{proof}
Now we prove Theorem \ref{thm:selfshrinkereuclidean1} and Corollary
\ref{cor:selfshrinkereuclidean2}.
\begin{proof}[Proof of Theorem \ref{thm:selfshrinkereuclidean1}]
The self-shrinker $M$ intercepts the sphere $S^{n}\left(tp,\sqrt{2n+t^{2}}\right)$
for any unit vector $p$ in $\mathbb{R}^{n+1}$ and $0\leq t<\infty$.
To see this consider the function $u$ on $M$ given by 
\[
u=2n+t^{2}-\left|x-tp\right|^{2}.
\]
Suppose that $M$ does not intercept the sphere $S^{n}\left(tp,\sqrt{2n+t^{2}}\right)$.
Then $u$ is positive or negative. We have
\[
u=u_{1}+2tu_{2},
\]
where $u_{1}=2n-\left|x\right|^{2}$ and $u_{2}=\left\langle x,p\right\rangle $.
By Lemma \ref{lem:equations} the function $u$ is a linear combination
of $L_{f}^{2}\left(M\right)$ eigenfunctions of the drifted Laplacian
of $M$ with nonzero eigenvalues. Since $M$ has finite weighted volume
(by Lemma \ref{lem:volume}) it follows that $u$ is orthogonal to
the constant function with respect to the $L_{f}^{2}\left(M\right)$
inner product i.e. $\int_{M}ue^{-f}=0$, a contradiction.

The self-shrinker $M$ intercepts the halfspace $\left\langle x,p\right\rangle =0$
in $\mathbb{R}^{n+1}$ for any unit vector $p$ in $\mathbb{R}^{n+1}$.
To see this consider the function $v$ on $M$ given by 
\[
v=\left\langle x,p\right\rangle .
\]
Suppose that $M$ does not intercept the halfspace $\left\langle x,p\right\rangle =0$.
Then $v$ is positive or negative. By Lemma \ref{lem:equations} this
function is a $L_{f}^{2}\left(M\right)$ eigenfunction of the drifted
Laplacian of $M$ with nonzero eigenvalue. As before $\int_{M}ve^{-f}=0$,
a contradiction.

By the facts above the self-shrinker $M$ intercepts all members of the collection
$\mathcal{C}$.

Now assume that $M$ lies in the closed set $\overline{B^{n+1}\left(tp,\sqrt{2n+t^{2}}\right)}$.
For $0\leq t<\infty$ the function $u$ above is non-negative with
$\int_{M}ue^{-f}=0$, so $u$ is identically zero and $M$ is the
sphere $S^{n}\left(tp,\sqrt{2n+t^{2}}\right)$, which by Lemma \ref{lem:sphereselfshrinker}
is the sphere $S^{n}\left(o,\sqrt{2n}\right)$. For $t=\infty$ the
function $v$ above is non-negative with $\int_{M}ve^{-f}=0$, so
$v$ is identically zero and $M$ is the hyperplane $\left\langle x,p\right\rangle =0$.
\end{proof}

\begin{proof}[Proof of Corollary \ref{cor:selfshrinkereuclidean2}]
Assume that the Gaussian image of the self-shrinker $M$ lies in
a closed semisphere. Given a vector $v$ in $\mathbb{R}^{n+1}$ consider
the function $u$ on $M$ given by 
\[
u=\left\langle \nu,v\right\rangle .
\]
Then $u$ is non-negative for some vector $v$ in $\mathbb{R}^{n+1}$.
By Lemma 5.5 in \cite{CM} (or Proposition 2 in \cite{CMZ2}) we have
\[
\Delta_{f}u+\left|A\right|^{2}u=0.
\]
If $u$ is identically zero then $M$ is a hyperplane. If $u$ is
not identically zero then the first eigenvalue of the operator $\Delta_{f}+\left|A\right|^{2}$
is non-negative, so 
\[
\int_{M}\left|A\right|^{2}\phi^{2}e^{-f}\leq\int_{M}\left|\nabla\phi\right|^{2}e^{-f}
\]
for all smooth functions $\phi$ on $M$ with compact support. Consider
the intrinsic open ball $B_{r}$ on $M$ of radius $r$ and center
at a fixed point of $M$. Take a cutoff function $\phi$ on $M$ such
that $\phi=1$ on $B_{r}$, $\phi=0$ on $M\setminus B_{r+1}$, $0\leq\phi\leq1$
and $\left|\nabla\phi\right|\leq C$ on $B_{r+1}\setminus B_{r}$.
Since $M$ has finite weighted volume (by Lemma \ref{lem:volume}), by the monotone convergence theorem sending $r\to\infty$ we see that that $A$ is identically zero and $M$ is a hyperplane.

Suppose that the self-shrinker $M$ lies inside the closed product
\[
\overline{B^{k+1}\left(v,\sqrt{2k+\left|v\right|^{2}}\right)}\times\mathbb{R}^{n-k}.
\]
Consider the function $u$ on $M$ given by 
\[
u=\sum_{i=1}^{k+1}\left(x_{i}-v_{i}\right)^{2}-2k-\sum_{i=1}^{k+1}v_{i}^{2}.
\]
Then $u$ is non-positive. We have
\[
u=u_{1}-2u_{2},
\]
where $u_{1}=\sum_{i=1}^{k+1}x_{i}^{2}-2k$ and $u_{2}=\sum_{i=1}^{k+1}x_{i}v_{i}$.
By Lemma \ref{lem:equations} the functions $u_{1}$ and $u_{2}$
and their gradients are in $L_{f}^{2}\left(M\right)$ and satisfy
$\Delta_{f}u_{1}\geq-u_{1}$ and $\Delta_{f}u_{2}=-\frac{1}{2}u_{2}$.
Thus 
\[
0\geq u\geq-\Delta_{f}u_{1}+4\Delta_{f}u_{2}.
\]
To prove that $u$ is identically zero it suffices to show that $\int_{M}\Delta_{f}u_{1}\cdot e^{-f}=0$
and $\int_{M}\Delta_{f}u_{2}\cdot e^{-f}=0$. The function $u_{2}$
is a $L_{f}^{2}\left(M\right)$ eigenfunction of the drifted Laplacian
of $M$ with nonzero eigenvalue. Since $M$ has finite weighted volume
(by Lemma \ref{lem:volume}) it follows that $u_{2}$ is orthogonal
to the constant function with respect to the $L_{f}^{2}\left(M\right)$
inner product i.e. $\int_{M}u_{2}e^{-f}=0$, so $\int_{M}\Delta_{f}u_{2}\cdot e^{-f}=0$.
By Lemma \ref{lem:equations} we have
\[
\left|\Delta_{f}u_{1}\right|\leq\sum_{i=1}^{k+1}x_{i}^{2}+2\left(k+1\right).
\]
By Lemma \ref{lem:volume} and Lemma \ref{lem:equations} the functions
in the right hand side are in $L_{f}^{1}\left(M\right)$. It follows
from Lemma \ref{lem:integralzero} that $\int_{M}\Delta_{f}u_{1}\cdot e^{-f}=0$.
Thus $u$ is identically zero and $M$ is the product $S^{k}\left(v,\sqrt{\left|v\right|^{2}+2k}\right)\times\mathbb{R}^{n-k}$,
which by Lemma \ref{lem:sphereselfshrinker} is the product $S^{k}\left(o,\sqrt{2k}\right)\times\mathbb{R}^{n-k}$.

Suppose that the self-shrinker $M$ lies outside the closed product
\[
\overline{B^{k+1}\left(v,\sqrt{\left|v\right|^{2}+2\left(k+1\right)}\right)}\times\mathbb{R}^{n-k}.
\]
Consider the function $u$ on $M$ given by 
\[
u=\sum_{i=1}^{k+1}\left(x_{i}-v_{i}\right)^{2}-2\left(k+1\right)-\sum_{i=1}^{k+1}v_{i}^{2}.
\]
Then $u$ is non-negative. We have
\[
u=u_{1}-2u_{2},
\]
where $u_{1}=\sum_{i=1}^{k+1}x_{i}^{2}-2\left(k+1\right)$ and $u_{2}=\sum_{i=1}^{k+1}x_{i}v_{i}$.
By Lemma \ref{lem:equations} the functions $u_{1}$ and $u_{2}$
and their gradients are in $L_{f}^{2}\left(M\right)$ and satisfy
$\Delta_{f}u_{1}\leq-u_{1}$ and $\Delta_{f}u_{2}=-\frac{1}{2}u_{2}$.
Thus 
\[
0\leq u\leq-\Delta_{f}u_{1}+4\Delta_{f}u_{2}.
\]
As before $u$ is identically zero and $M$ is the product $S^{k}\left(v,\sqrt{\left|v\right|^{2}+2\left(k+1\right)}\right)\times\mathbb{R}^{n-k}$.
By Lemma \ref{lem:sphereselfshrinker} this product is not a self-shrinker,
a contradiction.

Suppose that there is a hyperplane $\left\langle x,v\right\rangle =0$
in $\mathbb{R}^{n+1}$ separating the self-shrinker $M$ into two
parts with one part lying inside the closed ball $\overline{B^{n+1}\left(o,\sqrt{2n}\right)}$
and the other part lying outside the open ball $B^{n+1}\left(o,\sqrt{2n}\right)$.
Consider the function $u$ on $M$ given by
\[
u=\left(2n-\left|x\right|^{2}\right)\left\langle x,v\right\rangle .
\]
Then $u$ is non-negative or non-positive. We have
\[
u=u_{1}u_{2},
\]
where $u_{1}=2n-\left|x\right|^{2}$ and $u_{2}=\left\langle x,v\right\rangle $.
By Lemma \ref{lem:equations} the functions $u_{1}$ and $u_{2}$
are $L_{f}^{2}\left(M\right)$ eigenfunctions of the drifted Laplacian
of $M$ with distinct eigenvalues, so $u_{1}$ is orthogonal to $u_{2}$
with respect to the $L_{f}^{2}\left(M\right)$ inner product i.e.
$\int_{M}u_{1}u_{2}e^{-f}=0$. Thus $u$ is identically zero and $M$
is the sphere $S^{n}\left(o,\sqrt{2n}\right)$.
\end{proof}
By the proof of Lemma \ref{lem:sphereselfshrinker} we have the following
result.
\begin{lem}
\label{lem:sphereselfshrinker2}Given $r>0$ the hypersurface $S_{r}^{n-k}\times S_{\sqrt{2\left(k-1\right)}}^{k}$ in the gradient shrinking Ricci soliton
$\mathbb{R}^{n+1-k}\times S_{\sqrt{2\left(k-1\right)}}^{k}$ is a self-shrinker if and only if it is the product $S_{\sqrt{2\left(n-k\right)}}^{n-k}\times S_{\sqrt{2\left(k-1\right)}}^{k}$.
\end{lem}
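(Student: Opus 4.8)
The plan is to repeat the short computation in the proof of Lemma \ref{lem:sphereselfshrinker}, with two simplifications. First, the sphere $S_r^{n-k}$ in the Euclidean factor is centered at the origin, so there is no translation vector to track. Second, the factor $S_{\sqrt{2(k-1)}}^{k}$ is the \emph{entire} second factor of the ambient space, so it contributes nothing to the normal bundle nor to the second fundamental form of the product hypersurface; all the relevant geometry takes place in the Euclidean factor $\mathbb{R}^{n+1-k}$.

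Concretely, I would take the canonical basis $e_{1},\dots,e_{n+1-k}$ and cartesian coordinates $x_{1},\dots,x_{n+1-k}$ of $\mathbb{R}^{n+1-k}$. Because the Levi-Civita connection of a product metric splits as the direct sum of the connections of the factors, the hypersurface $S_r^{n-k}\times S_{\sqrt{2(k-1)}}^{k}$ has unit normal $\nu=\frac1r\sum_{i=1}^{n+1-k}x_{i}e_{i}=\frac{x}{r}$ and mean curvature vector $\overrightarrow{H}=-\frac{n-k}{r}\nu$, exactly as for $S_r^{n-k}\subset\mathbb{R}^{n+1-k}$. Since $f(x,y)=|x|^{2}/4$ gives $\overline{\nabla}f=\frac12 x$ and the position vector $x$ of a point of $S_r^{n-k}$ is normal to the hypersurface, we get $(\overline{\nabla}f)^{\perp}=\frac12 x=\frac r2\nu$. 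Hence the self-shrinker equation $\overrightarrow{H}+(\overline{\nabla}f)^{\perp}=0$ reduces to $\bigl(\tfrac r2-\tfrac{n-k}{r}\bigr)\nu=0$, equivalently $\sum_{i=1}^{n+1-k}x_{i}^{2}=r^{2}=2(n-k)$. This gives both directions at once: the product $S_{\sqrt{2(n-k)}}^{n-k}\times S_{\sqrt{2(k-1)}}^{k}$ is a self-shrinker, and conversely any self-shrinker of the given form must have $r=\sqrt{2(n-k)}$.

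There is no real obstacle here; the only point deserving a word of care is the claim that the second fundamental form of a product $N_{1}\times\overline{M}_{2}$ inside a product ambient $\overline{M}_{1}\times\overline{M}_{2}$ coincides with that of $N_{1}$ inside $\overline{M}_{1}$, which is an immediate consequence of the splitting of the product connection (the same splitting implicitly used in the proof of Lemma \ref{lem:characterization}).
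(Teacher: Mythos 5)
Your proposal is correct and follows essentially the same route as the paper, which proves this lemma simply by appealing to the computation in the proof of Lemma \ref{lem:sphereselfshrinker}. Your explicit version of that computation in the cylinder ($\nu=\frac{x}{r}$, $\overrightarrow{H}=-\frac{n-k}{r}\nu$, $\left(\overline{\nabla}f\right)^{\perp}=\frac{r}{2}\nu$, hence $r^{2}=2\left(n-k\right)$), together with the observation that the totally geodesic factor $S_{\sqrt{2\left(k-1\right)}}^{k}$ contributes nothing to the second fundamental form, is exactly the intended argument.
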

Now we prove Theorem \ref{thm:selfshrinkercylinder}.
\begin{proof}[Proof of Theorem \ref{thm:selfshrinkercylinder}]
Suppose that the self-shrinker $M$ lies inside the closed product
$\overline{B_{\sqrt{2\left(n-k\right)}}^{n+1-k}}\times S_{\sqrt{2\left(k-1\right)}}^{k}$.
Consider the function $u$ on $M$ given by 
\[
u=2\left(n-k\right)-\left|x\right|^{2},
\]
where $x$ is the position vector in $\mathbb{R}^{n+1-k}$. Then $u$ is non-negative. By Lemma \ref{lem:equations} the function
$u$ and its gradient are in $L_{f}^{2}\left(M\right)$ and satisfies
\[
0\leq u\leq-\Delta_{f}u.
\]
Since $\Delta_{f}u$ does not change sign, by Lemma \ref{lem:integralzero} we have $\int_{M}\Delta_{f}u\cdot e^{-f}=0$.
Thus $u$ is identically zero and $M$ is the product $S_{\sqrt{2\left(n-k\right)}}^{n-k}\times S_{\sqrt{2\left(k-1\right)}}^{k}$.

Suppose that the self-shrinker $M$ lies outside the product $\overline{B_{\sqrt{2\left(n+1-k\right)}}^{n+1-k}}\times S_{\sqrt{2\left(k-1\right)}}^{k}$.
Consider the function $u$ on $M$ given by 
\[
u=\left|x\right|^{2}-2\left(n+1-k\right).
\]
Then $u$ is non-negative. By Lemma \ref{lem:equations} the function
$u$ and its gradient are in $L_{f}^{2}\left(M\right)$ and satisfies
\[
0\leq u\leq-\Delta_{f}u.
\]
As before $u$ is identically zero and $M$ is the product $S_{\sqrt{2\left(n+1-k\right)}}^{n-k}\times S_{\sqrt{2\left(k-1\right)}}^{k}$.
By Lemma \ref{lem:sphereselfshrinker2} this product is not a self-shrinker,
a contradiction.

Suppose that the self-shrinker $M$ lies in the product of the closed
halfspace satisfying $\sum_{i=1}^{n+1-k}x_{i}v_{i}\geq0$ in $\mathbb{R}^{n+1-k}$
with the sphere $S_{\sqrt{2\left(k-1\right)}}^{k}$ for some vector
$v$ in $\mathbb{R}^{n+1-k}$. Consider the function $u$ on $M$
given by 
\[
u=\sum_{i=1}^{n+1-k}x_{i}v_{i}.
\]
Then $u$ is non-negative. By Lemma \ref{lem:equations} this function
is a $L_{f}^{2}\left(M\right)$ eigenfunction of the drifted Laplacian
of $M$ with nonzero eigenvalue. Since $M$ has finite weighed volume
(by Lemma \ref{lem:volume}) it follows that $u$ is orthogonal to
the constant function with respect to the $L_{f}^{2}\left(M\right)$
inner product i.e. $\int_{M}ue^{-f}=0$. Thus $u$ is identically
zero and $M$ is the product of the hyperplane $\sum_{i=1}^{n+1-k}x_{i}v_{i}=0$
with the sphere $S_{\sqrt{2\left(k-1\right)}}^{k}$.\end{proof}

\begin{bibdiv}
\begin{biblist}

\bib{CE}{article}{
   author={Cavalcante, Marcos P.},
   author={Espinar, Jos{\'e} M.},
   title={Halfspace type theorems for self-shrinkers},
   journal={Bull. Lond. Math. Soc.},
   volume={48},
   date={2016},
   number={2},
   pages={242--250},
   issn={0024-6093},
   review={\MR{3483061}},
   doi={10.1112/blms/bdv099},
}

\bib{C}{article}{
   author={Chen, Bing-Long},
   title={Strong uniqueness of the Ricci flow},
   journal={J. Differential Geom.},
   volume={82},
   date={2009},
   number={2},
   pages={363--382},
   issn={0022-040X},
   review={\MR{2520796}},
}

\bib{CMZ2}{article}{
   author={Cheng, Xu},
   author={Mejia, Tito},
   author={Zhou, Detang},
   title={Simons-type equation for $f$-minimal hypersurfaces and
   applications},
   journal={J. Geom. Anal.},
   volume={25},
   date={2015},
   number={4},
   pages={2667--2686},
   issn={1050-6926},
   review={\MR{3427142}},
}
		
\bib{CMZ3}{article}{
   author={Cheng, Xu},
   author={Mejia, Tito},
   author={Zhou, Detang},
   title={Stability and compactness for complete $f$-minimal surfaces},
   journal={Trans. Amer. Math. Soc.},
   volume={367},
   date={2015},
   number={6},
   pages={4041--4059},
   issn={0002-9947},
   review={\MR{3324919}},
   doi={10.1090/S0002-9947-2015-06207-2},
}
		
\bib{CMZ}{article}{
   author={Cheng, Xu},
   author={Mejia, Tito},
   author={Zhou, Detang},
   title={Eigenvalue estimate and compactness for closed $f$-minimal
   surfaces},
   journal={Pacific J. Math.},
   volume={271},
   date={2014},
   number={2},
   pages={347--367},
   issn={0030-8730},
   review={\MR{3267533}},
   doi={10.2140/pjm.2014.271.347},
}
\bib{CZ13-2}{article}{
   author={Cheng, Xu},
   author={Zhou, Detang},
   title={Stability properties and gap theorem for complete f-minimal
   hypersurfaces},
   journal={Bull. Braz. Math. Soc. (N.S.)},
   volume={46},
   date={2015},
   number={2},
   pages={251--274},
   issn={1678-7544},
   review={\MR{3448944}},
   doi={10.1007/s00574-015-0092-z},
}

\bib{CZ}{article}{
   author={Cheng, Xu},
   author={Zhou, Detang},
   title={Volume estimate about shrinkers},
   journal={Proc. Amer. Math. Soc.},
   volume={141},
   date={2013},
   number={2},
   pages={687--696},
   issn={0002-9939},
   review={\MR{2996973}},
   doi={10.1090/S0002-9939-2012-11922-7},
}

\bib{CZ13}{article}{
 author={Cheng, Xu},
   author={Zhou, Detang},
   title={Eigenvalues of the drifted Laplacian
on complete metric measure spaces},
   journal={Communications
in Contemporary Mathematics},
   volume={},
   date={},
   number={},
   pages={},
   issn={},
   review={},
   doi={10.1142/S0219199716500012},
}

\bib{MR2732975}{article}{
   author={Cao, Huai-Dong},
   author={Zhou, Detang},
   title={On complete gradient shrinking Ricci solitons},
   journal={J. Differential Geom.},
   volume={85},
   date={2010},
   number={2},
   pages={175--185},
   issn={0022-040X},
   review={\MR{2732975}},
}
\bib{CM}{article}{
   author={Colding, Tobias H.},
   author={Minicozzi, William P., II},
   title={Generic mean curvature flow I: generic singularities},
   journal={Ann. of Math. (2)},
   volume={175},
   date={2012},
   number={2},
   pages={755--833},
   issn={0003-486X},
   review={\MR{2993752}},
   doi={10.4007/annals.2012.175.2.7},
}

\bib{CM2}{article}{
   author={Colding, Tobias Holck},
   author={Minicozzi, William P., II},
   title={Uniqueness of blowups and \L ojasiewicz inequalities},
   journal={Ann. of Math. (2)},
   volume={182},
   date={2015},
   number={1},
   pages={221--285},
   issn={0003-486X},
   review={\MR{3374960}},
   doi={10.4007/annals.2015.182.1.5},
}

\bib{DXY}{article}{
   author={Ding, Q.},
   author={Xin, Y.L.},
author={Yang, L.}
   title={The rigidity theorems of
self shrinkers via Gauss maps},
   journal={ arXiv 1203.1096},
}

\bib{E}{article}{
   author={Ecker, Klaus},
   title={Logarithmic Sobolev inequalities on submanifolds of Euclidean
   space},
   journal={J. Reine Angew. Math.},
   volume={522},
   date={2000},
   pages={105--118},
   issn={0075-4102},
   review={\MR{1758578}},
   doi={10.1515/crll.2000.033},
}

\bib{EMT}{article}{
   author={Enders, Joerg},
   author={M{\"u}ller, Reto},
   author={Topping, Peter M.},
   title={On type-I singularities in Ricci flow},
   journal={Comm. Anal. Geom.},
   volume={19},
   date={2011},
   number={5},
   pages={905--922},
   issn={1019-8385},
   review={\MR{2886712}},
   doi={10.4310/CAG.2011.v19.n5.a4},
}

\bib{FG2} 
{article}{
   author={Fern\'andez-L\'opez, M.},
   author={Garc\'ia-R\'io, E},
   title={Rigidity of shrinking Ricci solitons},
   journal={Math. Z.},
   volume={269},
   date={2011},
   number={1-2},
   pages={461--466},
   issn={0025-5874},
   review={\MR{2836079}},
   doi={10.1007/s00209-010-0745-y},
}

\bib{Ham}{article}{
   author={Hamilton, Richard S.},
   title={The formation of singularities in the Ricci flow},
   conference={
      title={Surveys in differential geometry, Vol.\ II},
      address={Cambridge, MA},
      date={1993},
   },
   book={
      publisher={Int. Press, Cambridge, MA},
   },
   date={1995},
   pages={7--136},
   review={\MR{1375255}},
}
		
\bib{HM}{article}{
   author={Hoffman, D.},
   author={Meeks, W. H., III},
   title={The strong halfspace theorem for minimal surfaces},
   journal={Invent. Math.},
   volume={101},
   date={1990},
   number={2},
   pages={373--377},
   issn={0020-9910},
   review={\MR{1062966}},
   doi={10.1007/BF01231506},
}

\bib{Hu87} {article}{
   author={Huisken, Gerhard},
   title={Deforming hypersurfaces of the sphere by their mean curvature},
   journal={Math. Z.},
   volume={195},
   date={1987},
   number={2},
   pages={205--219},
   issn={0025-5874},
   review={\MR{892052}},
   doi={10.1007/BF01166458},
}

\bib{Hu}{article}{
   author={Huisken, Gerhard},
   title={Asymptotic behavior for singularities of the mean curvature flow},
   journal={J. Differential Geom.},
   volume={31},
   date={1990},
   number={1},
   pages={285--299},
   issn={0022-040X},
   review={\MR{1030675}},
}

\bib{KM}{article}{
   author={Kleene, Stephen},
   author={M{\o}ller, Niels Martin},
   title={Self-shrinkers with a rotational symmetry},
   journal={Trans. Amer. Math. Soc.},
   volume={366},
   date={2014},
   number={8},
   pages={3943--3963},
   issn={0002-9947},
   review={\MR{3206448}},
   doi={10.1090/S0002-9947-2014-05721-8},
}
		
\bib{L}{book}{
   author={Li, Peter},
   title={Geometric analysis},
   series={Cambridge Studies in Advanced Mathematics},
   volume={134},
   publisher={Cambridge University Press, Cambridge},
   date={2012},
   pages={x+406},
   isbn={978-1-107-02064-1},
   review={\MR{2962229}},
   doi={10.1017/CBO9781139105798},
}

\bib{MS} {article}{
   author={Munteanu, O.},
   author={Sesum, N.},
   title={On gradient Ricci solitons},
   journal={J. Geom. Anal.},
   volume={23},
   date={2013},
   number={2},
   pages={539--561},
   issn={1050-6926}
   review={\MR{3023848}},
   doi={10.1007/s12220-011-9252-6},
}

\bib{P}{article}{
author={Perelman, G.},
title={ The entropy formula for the Ricci flow and
its geometric applications},
 journal={arXiv 0211159},
}

\bib{RS}{book}{
   author={Reed, Michael},
   author={Simon, Barry},
   title={Methods of modern mathematical physics. IV. Analysis of operators},
   publisher={Academic Press [Harcourt Brace Jovanovich, Publishers], New
   York-London},
   date={1978},
   pages={xv+396},
   isbn={0-12-585004-2},
   review={\MR{0493421}},
}
\bib{SY}{article}{
author={Sheng, W.},
author={Yu, H.},
title={ Evolving hypersurfaces by their mean
curvature in the background manifold evolving by Ricci flow}
 journal={arXiv 1407.5195},

}
\bib{MR2515414}{article}{
   author={Tam, Luen-Fai},
   author={Zhou, Detang},
   title={Stability properties for the higher dimensional catenoid in $\Bbb
   R^{n+1}$},
   journal={Proc. Amer. Math. Soc.},
   volume={137},
   date={2009},
   number={10},
   pages={3451--3461},
   issn={0002-9939},
   review={\MR{2515414}},
   doi={10.1090/S0002-9939-09-09962-6},
}
\bib{W}{article}{
author={Wei, Y.}
title={ On lower volume growth estimate for $f$-minimal
submanifolds in gradient shrinking soliton},
journal={ International Mathematics
Research Notices (accepted)},
}

\bib{Y}{article}{
title={ Ricci-mean curvature flows in gradient shrinking
Ricci solitons},
author={Yamamoto, H.},
journal={arXiv 1501.06256},

}

\end{biblist}
\end{bibdiv}
\end{document}